\newcommand{\rojo}{}
\newcommand{\bydef}{:=}
\newcommand{\Skew}{\mathrm{Skew}}%skew Hermitian (skew symmetric)
\newcommand{\id}{\mathrm{id}}%identity map
\newcommand{\trace}{\mathrm{tr}}
\newcommand{\cA}{\mathcal{A}}%algebras
\newcommand{\cB}{\mathcal{B}} 
\newcommand{\cC}{\mathcal{C}}
\newcommand{\cD}{\mathcal{D}} 
\newcommand{\cH}{\mathcal{H}} 
\newcommand{\cJ}{\mathcal{J}} 
\newcommand{\cL}{\mathcal{L}}
\newcommand{\cS}{\mathcal{S}} 
\newcommand{\cT}{\mathcal{T}}
\newcommand{\cW}{\mathcal{W}}
\newcommand{\cJord}{{\mathcal{J}ord}}
\newcommand{\frs}{{\mathfrak s}}
\newcommand{\ZZ}{\mathbb{Z}}
\newcommand{\FF}{\mathbb{F}} 
\DeclareMathOperator{\Hom}{\mathrm{Hom}}
\DeclareMathOperator{\End}{\mathrm{End}}
\DeclareMathOperator{\AAut}{\mathbf{Aut}}
\DeclareMathOperator{\Der}{\mathrm{Der}}
\DeclareMathOperator{\Mat}{\mathrm{Mat}}
\DeclareMathOperator{\Instrl}{\mathrm{Instrl}}
\newcommand{\ad}{\mathrm{ad}}
\newcommand{\frsl}{{\mathfrak{sl}}}
\newcommand{\frsp}{{\mathfrak{sp}}}
\newcommand{\frso}{{\mathfrak{so}}}
\newcommand{\frgl}{{\mathfrak{gl}}}
\newcommand{\SLs}{\mathbf{SL}}
\providecommand{\espan}[1]{\operatorname{span}\left\{ #1\right\}}
\providecommand{\ptriple}[1]{\boldsymbol{(}#1\boldsymbol{)}}
\newenvironment{romanenumerate} 
{\begin{enumerate}

}{\end{enumerate}}
\newcommand{\Cl}{\mathfrak{Cl}} %Clifford algebra
\newcommand{\bup}{\textup{b}} 
\newcommand{\nup}{\textup{n}}
\newcommand{\hup}{\textup{h}}
\newtheorem{theorem}{Theorem}[section]
\newtheorem{proposition}[theorem]{Proposition}
\newtheorem{lemma}[theorem]{Lemma}
\theoremstyle{definition} 
\newtheorem{definition}[theorem]{Definition}
\theoremstyle{remark} \newtheorem{remark}[theorem]{Remark}
\numberwithin{equation}{section}
\begin{document}

\title{Short $(\SLs_2\times\SLs_2)$-structures on Lie algebras}

\author[P.D. Beites]{Patricia D. Beites}
\address{Departamento de
Matem\'{a}tica e Centro de Matem\'atica e 
Aplica\c{c}\~oes 
da Universidade da Beira Interior,
Universidade da Beira Interior, 6201-001 Covilh\~{a}, Portugal}
\email{pbeites@ubi.pt}

\author[A.S.~C\'ordova-Mart{\'\i}nez]{Alejandra S.~C\'ordova-Mart{\'\i}nez}
\address{Departamento de
Matem\'{a}ticas e Instituto Universitario de Matem\'aticas y
Aplicaciones, Universidad de Zaragoza, 50009 Zaragoza, Spain}
\email{acordova@unizar.es} 
\thanks{A.S.~C\'ordova-Mart{\'\i}nez and A.~Elduque were supported by grant PID2021-123461NB-C21, funded by 
MCIN/AEI/10.13039/501100011033 and by
 ``ERDF A way of making Europe''. A.S.~C\'ordova-Mart{\'\i}nez was also supported
 by grant S60\_20R (Gobierno de 
Arag\'on, Grupo de investigaci\'on ``Investigaci\'on en Educaci\'on Matem\'atica''); and A.~Elduque by
grant E22\_20R (Gobierno de 
Arag\'on, Grupo de investigaci\'on ``\'Algebra y Geometr{\'\i}a'').}

\author[I.~Cunha]{Isabel Cunha}
\address{Departamento de
Matem\'{a}tica e Centro de Matem\'atica e Aplica\c{c}\~oes da Universidade da Beira Interior,
Universidade da Beira Interior, 6201-001 Covilh\~{a}, Portugal}
\email{icunha@ubi.pt} 
\thanks{P.D.~Beites and I.~Cunha were supported by FCT (Funda\c{c}\~ao para a Ci\^encia e
a Tecnologia, Portugal), research project UIDB/00212/2020 of CMA-UBI (Centro de 
Matem\'atica e Aplica\c{c}\~oes, Universidade da Beira Interior, Portugal). P.D.~Beites was
also supported by grant PID2021-123461NB-C22, funded by 
MCIN/AEI/10.13039/501100011033 and by
 ``ERDF A way of making Europe'' (Spain).}

\author[A.~Elduque]{Alberto Elduque} 
\address{Departamento de
Matem\'{a}ticas e Instituto Universitario de Matem\'aticas y
Aplicaciones, Universidad de Zaragoza, 50009 Zaragoza, Spain}
\email{elduque@unizar.es}

\subjclass[2020]{Primary 17B70}

\keywords{$S$-structures; $J$-ternary algebras; structurable algebras.}

%\date{}

\begin{abstract}
$\mathbf{S}$-structures on Lie algebras, introduced by Vinberg, represent a broad generalization of the
notion of gradings by abelian groups. Gradings by, not necessarily reduced, root systems provide many
examples of natural $\mathbf{S}$-structures. Here we deal with a situation not covered by 
these gradings: the short $(\SLs_2\times\SLs_2)$-structures, where the reductive group is the simplest 
semisimple but not simple reductive group. The algebraic objects that coordinatize these 
structures are the $J$-ternary algebras of Allison, endowed with a nontrivial idempotent.
\end{abstract}

\maketitle

\begin{center}
\textit{In memory of Georgia Benkart}
\end{center}

\bigskip

\section{Introduction}\label{se:intro}

\emph{All the algebras considered will be defined over an arbitrary ground field $\FF$ of
characteristic $\neq 2,3$. Tensor products over $\FF$ will simply be written as $\otimes$, instead
of $\otimes_\FF$. Algebraic groups over $\FF$ will be understood in the sense of affine group schemes of finite type.}

A grading on a Lie algebra $\cL$ by an abelian group $G$ is determined by a homomorphism
$\mathbf{D}(G)\rightarrow \AAut(\cL)$, where $\mathbf{D}(G)$ is the diagonalizable (and hence
reductive) group scheme whose representing Hopf algebra is the group algebra $\FF G$ 
(see, e.g., \cite[Chapter 1]{EKmon}). 

Vinberg  considered a large extension of this idea in his paper entitled \emph{Non-abelian gradings of Lie
algebras} \cite{Vinberg}, by substituting the diagonalizable groups above by arbitrary reductive groups.

\begin{definition}[{\cite[Definition 0.1]{Vinberg}}]\label{df:Sstructure}
Let $\mathbf{S}$ be a reductive algebraic group and let $\cL$ be a Lie algebra. An 
\emph{$\mathbf{S}$-structure} on $\cL$ is a homomorphism $\Phi:\mathbf{S}\rightarrow\AAut(\cL)$ from $\mathbf{S}$ into the algebraic group of automorphisms of $\cL$.
\end{definition}

Actually, this definition makes sense for nonassociative algebras, or more general algebraic systems, not just for Lie algebras.

Let $\mathbf{S}$ be a reductive algebraic group and let $\frs$ be its Lie algebra, the differential 
$\textup{d}\Phi$ of an $\mathbf{S}$-structure on the Lie algebra $\cL$ is a Lie algebra homomorphism 
$\textup{d}\Phi:\frs\rightarrow \Der(\cL)$. 

\begin{definition}\label{df:inner}
With the notations above, the $\mathbf{S}$-structure $\Phi:\mathbf{S}\rightarrow \AAut(\cL)$ on the Lie algebra $\cL$ is said to be \emph{inner} if there is a one-to-one Lie algebra homomorphism 
$\iota:\frs\hookrightarrow \cL$ such that the following diagram commutes: 
\begin{equation}\label{eq:iota}
\begin{tikzcd}
\frs\arrow[rd,"\textup{d}\Phi"']\arrow[r, hook,"\iota"]&\cL\arrow[d,"\ad"]\\
&\Der(\cL)
\end{tikzcd}
\end{equation}
\end{definition}

Note that if $\Phi:\mathbf{S}\rightarrow \AAut(\cL)$ is a non inner $\mathbf{S}$-structure on the 
Lie algebra $\cL$, then we can take the split extensions $\tilde\cL=\cL\oplus\frs$ as in 
\cite[p.~18]{Jacobson}, where $\frs$ acts on $\cL$ through $\textup{d}\Phi$, and this is endowed with a natural $\mathbf{S}$-structure. Hence it is not harmful to restrict to inner 
$\mathbf{S}$-structures.

\medskip

Definition \ref{df:Sstructure} is too general, so some restrictions on $\cL$ as a module for the
reductive group $\mathbf{S}$ must be imposed. Vinberg himself considered in \cite{Vinberg} two
different situations:
\begin{itemize}
\item A nontrivial $\SLs_2$-structure on a Lie algebra $\cL$ is called \emph{very short} if $\cL$ 
decomposes, as a module for $\SLs_2$, as a sum of copies of the adjoint module and of the trivial
one-dimensional module. Collecting isomorphic submodules, a very short $\SLs_2$-structure gives an 
isotypic decomposition of the form
\[
\cL=(\frsl_2\otimes\cJ)\oplus\cD
\]
and it turns out that the Lie bracket on $\cL$ induces a Jordan product on $\cJ$. The subalgebra $\cD$ acts by derivations on $\cJ$. All this goes back to \cite{Tits}. 

\item A nontrivial $\SLs_3$-structure on a Lie algebra $\cL$ is called \emph{short} if $\cL$ decomposes
as the direct sum of one copy of the adjoint representations, copies of its natural three-dimensional
module and of its dual, and copies of the trivial representation, so that the isotypic 
decomposition is:
\[
\cL=\frsl_3\oplus (V\otimes\cJ)\oplus (V^*\otimes \cJ')\oplus\cD.
\]
For simple $\cL$, $\cJ$ and $\cJ'$ may be identified, and inherit a structure of a cubic Jordan algebra
(see \cite[Equation (31)]{Vinberg}).  (A more general situation
was considered in \cite{BenkartElduque_sl3}.)
\end{itemize}

Stasenko \cite{Stasenko} has recently considered \emph{short $\SLs_2$-structures} on simple Lie algebras over the complex numbers (see Definition \ref{df:shortA1}). Although not with this 
terminology, the short $\SLs_2$-structures were considered in \cite{EO}. They are intimately related to the 
$J$-ternary algebras of Allison \cite{Allison}, a connection that will be reviewed in Section \ref{se:A1}.

{\rojo The reader should note that there is no general definition of short and very short structures. The definitions depend on the algebraic group $\mathbf{S}$ used (and on the inspiration of the different
authors to find a suitable name).}

\smallskip

Another important source of nice $\mathbf{S}$-structures is provided by the gradings by root systems,
initially considered by Berman and Moody \cite{BermanMoody} (see \cite{BenkartSmirnov} and the
references there in). 

In the simply-laced case, a Lie algebra graded by such a root system contains
a finite-dimensional split simple Lie algebra $\frs$ with such a root system, and it decomposes, 
as a module for $\frs$, as a direct sum of copies of the adjoint module and the trivial module,
so the corresponding isotypic decomposition has two components: $\cL=(\frs\otimes \cA)\oplus\cD$.
If $\mathbf{S}$ is the simply connected group with Lie algebra $\frs$, the action of $\frs$ integrates
to an $\mathbf{S}$-structure on $\cL$.

In the non simply-laced case, the isotypic decomposition also includes copies of the irreducible 
module for $\frs$ whose highest weight is the highest short root: 
$\cL=(\frs\otimes\cA)\oplus(W\otimes\cB)\oplus\cD$. Finally, the case of the nonreduced root systems
$BC_r$ gives isotypic decompositions with four components, with two exceptions
of five components (see \cite{AllisonBenkartGao02} and \cite{BenkartSmirnov}): $BC_1$-graded Lie 
algebras with grading subalgebra of type $D_1$ (which reduces to $5$-gradings), and $BC_2$-graded
Lie algebras with grading subalgebra of type $D_2=A_1\times A_1$.

\smallskip

Another nice class of $\mathbf{S}$-structures has been given in \cite{CunhaElduque}, where the
exceptional simple Lie algebras of type $E_r$, $r=7,8$, are shown to be endowed with a
$\SLs_2^r$-structure, such that the irreducible modules that appear are single copies of 
the adjoint modules for the factors of $\SLs_2^r$, tensor products of the two-dimensional natural
modules for some of these factors, and trivial modules. This allows us to coordinatize these
exceptional Lie algebras in terms of some algebras related to some well-known binary codes.

\medskip

The goal of this paper is to explore a new kind of $\mathbf{S}$-structures not covered by the
results mentioned above and where the reductive group is not simple: 
the short $(\SLs_2\times \SLs_2)$-structures  defined in Section 
\ref{se:A1A1}. These structures give isotypic decompositions with six components, because as a module for $\SLs_2\times\SLs_2$, the Lie algebra decomposes as a direct sum of 
copies of the adjoint modules for any of the two factors of $\SLs_2\times\SLs_2$, copies of the 
two-dimensional natural modules for each of these factors, copies of the tensor product of these two
natural modules, and copies of the trivial module. (See \eqref{eq:A1A1isotypic}.)

In particular, if we fix two of the factors $\SLs_2$ of the $\SLs_2^r$-structures in \cite{CunhaElduque},
a short $(\SLs_2\times\SLs_2)$-structure is obtained.

Trying to obtain directly the properties and multilinear operations among the six different components in 
\eqref{eq:A1A1isotypic} from the Lie bracket on the Lie algebra is quite cumbersome and not much
illuminating. However, as shown in Section \ref{se:A1A1}, the diagonal embedding $\SLs_2\rightarrow \SLs_2\times\SLs_2$ induces a short
$\SLs_2$-structure from any short $(\SLs_2\times\SLs_2)$-structure. This will be the clue to describe
the short $(\SLs_2\times\SLs_2)$-structures on a Lie algebra in terms of $J$-ternary algebras endowed with an idempotent (Theorem \ref{th:A1A1e}).

Section \ref{se:Examples} will be devoted to show examples of $J$-ternary algebras with idempotents 
and the corresponding short $(\SLs_2\times\SLs_2)$-structures on certain Lie algebras, and Section 
\ref{se:A1A1simple} will be devoted to using the results in \cite{Stasenko} to describe,
{\rojo in terms of the examples in  Section \ref{se:Examples},} all 
short $(\SLs_2\times\SLs_2)$-structures on the finite-dimensional simple Lie algebras over an
 algebraically closed field of characteristic $0$.

\smallskip

In concluding this introduction, let us mention that Lie algebras $\cL$ endowed with a homomorphism
$\Phi:\mathsf{G}\rightarrow \AAut(\cL)$, where $\mathsf{G}$ is a constant group scheme, have
been considered too in the literature and shown to be ``coordinatized'' by some nonassociative algebras
(see, e.g., \cite{EO,EOS3S4,EOS4}).

\bigskip
%----------------------------

\section{Short \texorpdfstring{$\SLs_2$}{SL2}-structures and 
\texorpdfstring{$J$}{J}-ternary algebras}\label{se:A1}

Short $\SLs_2$-structures on simple Lie algebras over the complex numbers have been 
considered by Stasenko \cite[Definition 2]{Stasenko}. Hence we extend his definition over arbitrary
fields.

\begin{definition}\label{df:shortA1}
An $\SLs_2$-structure $\Phi:\SLs_2\rightarrow \AAut(\cL)$ on a Lie algebra $\cL$ is said to be \emph{short} if $\cL$ decomposes, as a module for $\SLs_2$ via $\Phi$, into a direct sum of
copies of the adjoint, natural, and trivial modules.
\end{definition}

Therefore, the isotypic decomposition of $\cL$ allows us to describe $\cL$ as follows:
\begin{equation}\label{eq:shortA1_isotypic}
\cL=\bigl(\frsl(V)\otimes \cJ\bigr)\oplus \bigl(V\otimes\cT\bigr)\oplus\cD,
\end{equation}
for vector spaces $\cJ$, $\cT$, and $\cD$, where $V$ is the natural two-dimensional representation of 
$\SLs_2\simeq\SLs(V)$. The action of $\SLs_2$ is given by the adjoint action of $\SLs_2$ on 
$\frsl(V)$, its natural action on $V$, and the trivial action on $\cJ$, $\cT$ and $\cD$. 
The subspace $\cD$, being the subspace of fixed elements by $\SLs_2$, is a subalgebra of $\cL$.

\smallskip

%Perhaps for the introduction
This section is devoted to reviewing the connection between short $\SLs_2$-structures on a Lie algebra and the $J$-ternary algebras of Allison \cite{Allison}, developed in \cite{EO}.

\smallskip

The following well known results will be needed. See, e.g., \cite[Lemma 2.1]{EO}.

\begin{lemma}\label{le:sl2}
Let $V$ be a two-dimensional vector space.
\begin{romanenumerate}
\item 
The space $\Hom_{\frsl(V)}\bigl(\frsl(V)\otimes\frsl(V),\frsl(V)\bigr)$ of $\frsl(V)$-invariant linear maps  
$\frsl(V)\otimes\frsl(V)\rightarrow\frsl(V)$ is spanned by the (skew-symmetric) Lie bracket:
\[
f\otimes g\mapsto [f,g]=fg-gf.
\]
\item 
The space $\Hom_{\frsl(V)}\bigl(\frsl(V)\otimes\frsl(V),\FF\bigr)$ is spanned by the trace map:
\[
f\otimes g\mapsto \trace(fg).
\]
\item 
The space $\Hom_{\frsl(V)}\bigl(\frsl(V)\otimes V,V\bigr)$ is spanned by the natural action:
\[
f\otimes v\mapsto f(v).
\]
\item 
The space $\Hom_{\frsl(V)}\bigl(V\otimes V,\FF\bigr)$ is one-dimensional. Its nonzero elements
are of the form:
\[
u\otimes v\mapsto (u\mid v)
\]
for a nonzero skew-symmetric bilinear form $(.\mid .)$ on $V$. 
\item
The space $\Hom_{\frsl(V)}\bigl(V\otimes V,\frsl(V)\bigr)$ is one-dimensional. Once a nonzero
skew-symmetric bilinear form $(.\mid .)$ is fixed on $V$, this subspace is spanned by the following symmetric map:
\[
u\otimes v\mapsto \gamma_{u,v}\bigl(: w\mapsto (u\mid w)v+(v\mid w)u\bigr).
\]
\item
The spaces $\Hom_{\frsl(V)}\bigl(\frsl(V)\otimes\frsl(V),V\bigr)$, 
$\Hom_{\frsl(V)}\bigl(\frsl(V)\otimes V,\frsl(V)\bigr)$,  \\
$\Hom_{\frsl(V)}\bigl(\frsl(V)\otimes V,\FF\bigr)$, {\rojo and
$\Hom_{\frsl(V)}\bigl(V\otimes V,V\bigr)$} are all trivial.
\end{romanenumerate}

Moreover, $\Hom_{\frsl(V)}$ may be replaced by $\Hom_{\SLs(V)}$ all over.
\end{lemma}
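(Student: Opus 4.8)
The plan is to reduce every assertion to the Clebsch--Gordan decomposition of tensor products of simple $\frsl(V)$-modules, together with Schur's Lemma. Write $L(m)$ for the simple $\frsl(V)$-module of highest weight $m$, so that $\FF=L(0)$, $V=L(1)$, and $\frsl(V)\cong L(2)$; note also $V\otimes V\cong S^2V\oplus\Lambda^2V$ in characteristic $\neq 2$, with $\Lambda^2V\cong L(0)$ (the determinant representation, on which $\frsl(V)$ acts by the trace) and $S^2V\cong L(2)\cong\frsl(V)$, the latter isomorphism being, up to scalar, $u\otimes v\mapsto\gamma_{u,v}$ once a symplectic form is fixed. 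Since $\chr\FF\neq 2,3$, the characteristic is $0$ or at least $5$, and every weight occurring below is $\leq 4$, so the modules $L(0),\dots,L(4)$ are simple and the classical decompositions
\[
V\otimes V\cong L(2)\oplus L(0),\qquad
\frsl(V)\otimes V\cong L(3)\oplus L(1),\qquad
\frsl(V)\otimes\frsl(V)\cong L(4)\oplus L(2)\oplus L(0)
\]
remain valid (these tensor products are tilting modules, and for highest weights $\leq 4$ in this range the indecomposable tilting modules are simple).

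First I would read off the dimension of each Hom space via Schur's Lemma. From $\frsl(V)\otimes\frsl(V)\cong L(4)\oplus L(2)\oplus L(0)$ one gets that $\Hom_{\frsl(V)}(\frsl(V)^{\otimes 2},\frsl(V))$ and $\Hom_{\frsl(V)}(\frsl(V)^{\otimes 2},\FF)$ are one-dimensional (as $L(2)$ and $L(0)$ each occur once) and $\Hom_{\frsl(V)}(\frsl(V)^{\otimes 2},V)=0$ (no $L(1)$); from $\frsl(V)\otimes V\cong L(3)\oplus L(1)$ one gets that $\Hom_{\frsl(V)}(\frsl(V)\otimes V,V)$ is one-dimensional while $\Hom_{\frsl(V)}(\frsl(V)\otimes V,\frsl(V))=\Hom_{\frsl(V)}(\frsl(V)\otimes V,\FF)=0$, which settles (iii) and (vi); and from $V\otimes V\cong L(2)\oplus L(0)$ one gets that $\Hom_{\frsl(V)}(V\otimes V,\FF)$ and $\Hom_{\frsl(V)}(V\otimes V,\frsl(V))$ are one-dimensional, which gives the dimension parts of (iv) and (v).

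Next I would exhibit, in each nontrivial case, the explicit map in the statement and check that it is $\frsl(V)$-invariant and nonzero, so that by the dimension count it spans the corresponding one-dimensional Hom space: the Lie bracket $f\otimes g\mapsto[f,g]$ and the trace form $f\otimes g\mapsto\trace(fg)$ are invariant because $\trace$ is $\ad$-invariant; the evaluation $f\otimes v\mapsto f(v)$ is invariant by the very definition of the $\frsl(V)$-action on $V$; a nonzero skew-symmetric form $(\cdot\mid\cdot)$ is invariant, and every invariant bilinear form on $V$ is skew-symmetric because $S^2V\cong L(2)$ has no trivial summand (this yields the precise form of the elements in (iv)); and $\gamma_{u,v}$ is invariant and traceless — a one-line check in a symplectic basis, e.g.\ $\gamma_{v_1,v_1}$ has matrix $\left(\begin{smallmatrix}0&2\\0&0\end{smallmatrix}\right)$ — hence spans $\Hom_{\frsl(V)}(V\otimes V,\frsl(V))$. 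The symmetry assertions are then immediate: $[f,g]=-[g,f]$, while $\gamma_{u,v}=\gamma_{v,u}$ is visible from the formula (equivalently, $u\otimes v\mapsto\gamma_{u,v}$ factors through $S^2V$).

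Finally, for the last sentence: differentiating an $\SLs(V)$-equivariant map yields an $\frsl(V)$-equivariant one, so each space $\Hom_{\SLs(V)}(\cdots)$ is contained in the corresponding $\Hom_{\frsl(V)}(\cdots)$; and conversely each explicit spanning map above (Lie bracket, trace form, evaluation, symplectic form, $\gamma$) is manifestly $\GL(V)$- or $\SL(V)$-equivariant, so the two spaces coincide in every case. The only point requiring care is the validity of the Clebsch--Gordan decompositions in small positive characteristic, which is exactly where the standing hypothesis $\chr\FF\neq 2,3$ (forcing $\chr\FF=0$ or $\geq 5$, with all relevant highest weights $\leq 4$) is used; alternatively, one may bypass representation theory altogether and verify all six items by a direct, if longer, computation with the standard basis $\{e,h,f\}$ of $\frsl(V)$, valid under the same hypothesis.
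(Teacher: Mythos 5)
Your argument is correct, and it is essentially the approach the paper relies on: the paper does not prove the lemma itself but cites \cite{EO}, and its only in-text justification (the characteristic~$3$ remark, via the decompositions of $V\otimes V$, $\frsl(V)\otimes V$ and $\frsl(V)\otimes\frsl(V)$) proceeds exactly as you do — decompose the relevant tensor products, read off the Hom spaces by Schur's Lemma, and identify explicit invariant spanning maps. Your extra care in small positive characteristic (weights $\le 4$ with $\chr\FF=0$ or $\ge 5$, so the Clebsch--Gordan decompositions persist) and the differentiation argument for replacing $\Hom_{\frsl(V)}$ by $\Hom_{\SLs(V)}$ are exactly the points that need checking, and you handle them correctly.
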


\begin{remark}
Lemma \ref{le:sl2} is even valid in characteristic $3$. Actually, if the characteristic of our ground field
$\FF$ is $3$, then \cite[Theorem 1.11]{BO} gives the following isomorphisms of $\frsl(V)$-modules:
\[
V\otimes V\simeq \frsl(V)\oplus \FF,\quad \frsl(V)\otimes V\simeq Q(1),\quad
\frsl(V)\otimes \frsl(V)\simeq \frsl(V)\oplus Q(0),
\]
where $Q(1)$ is an indecomposable module with a unique maximal submodule $W$ such that
$Q(1)/W$ is isomorphic to $V$, and $Q(0)$ is an indecomposable module with a unique maximal
submodule $W$ such that $Q(0)/W$ is isomorphic to the trivial module $\FF$. It follows at once
that all the spaces in items \textbf{(i)--(v)} are one-dimensional, and those in item 
\textbf{(vi)} are trivial.
\end{remark}

{\rojo \emph{Let us return to our assumption that the characteristic of the ground field is $\neq 2,3$. This assumption will be kept throughout the paper with no further mention.}}

\smallskip

From now on, we will fix a nonzero skew-symmetric bilinear form $(.\mid .)$ on our two-dimensional vector space $V$.

Let $\cL$ be a Lie algebra with an inner $\SLs_2$-structure and isotypic decomposition as in 
\eqref{eq:shortA1_isotypic}. The $\SLs_2$-structure being inner forces $\cJ$ to contain a distinguished element $1$, such that $\frsl(V)\otimes 1$ is the image of $\iota$ in 
\eqref{eq:iota}.

{\rojo The $\SLs_2$-invariance or, equivalently, the $\frsl(V)$-invariance,} of the Lie bracket in our Lie algebra $\cL$ gives, for any $f,g\in\frsl(V)$, $u,v\in V$ and $D\in \cD$, the following conditions:
\begin{equation}\label{eq:shortA1bracket}
\begin{split}
[f\otimes a,g\otimes b]&=[f,g]\otimes a\cdot b+2\trace(fg)D_{a,b},\\
[f\otimes a,u\otimes x]&=f(u)\otimes a\bullet x,\\
[u\otimes x,v\otimes y]&=\gamma_{u,v}\otimes \langle x\mid y\rangle 
                                              +\bigl(u\mid v\bigr)d_{x,y},\\
[D,f\otimes a]&=f\otimes D(a),\\
[D,u\otimes x]&=u\otimes D(x),
\end{split}
\end{equation}
for suitable bilinear maps
\begin{equation}\label{eq:shortA1maps}
\begin{split}
\cJ\times \cJ\rightarrow \cJ,&\quad (a,b)\mapsto a\cdot b,\quad\text{(symmetric),}\\
\cJ\times \cJ\rightarrow \cD,&\quad (a,b)\mapsto D_{a,b},\quad\text{(skew-symmetric),}\\
\cJ\times \cT\rightarrow \cT,&\quad (a,x)\mapsto a\bullet x,\\
\cT\times \cT\rightarrow \cJ,&\quad (x,y)\mapsto \langle x\mid y\rangle\quad
                     \text{(skew-symmetric),}\\
\cT\times \cT\rightarrow \cD,&\quad (x,y)\mapsto d_{x,y}\quad\text{(symmetric),}\\
\cD\times \cJ\rightarrow \cJ,&\quad (D,a)\mapsto D(a),\\
\cD\times \cT\rightarrow \cT,&\quad (D,x)\mapsto D(x),
\end{split}
\end{equation}
such that 
{\rojo
\begin{equation}\label{eq:D1a}
1\cdot a=a,\quad D_{1,a}=0,\quad\text{and}\quad 1\bullet x=x,
\end{equation}
for any $a,b\in \cJ$, $x,y\in \cT$, 
 and $D\in\cD$.
}

The Jacobi identity on $\cL$ also shows that all these maps are invariant under the action of the Lie subalgebra $\cD$. The next result summarizes the properties of these maps:

\begin{theorem}[{\cite[Theorem 2.2]{EO}}]\label{th:ShortA1Properties}
A Lie algebra $\cL$ is endowed with an inner short $\SLs_2$-structure if and only if there is a 
two-dimensional vector space $V$ such that $\cL$ is, up to isomorphism, the Lie algebra in \eqref{eq:shortA1_isotypic}, with Lie bracket given in \eqref{eq:shortA1bracket}, for suitable 
bilinear maps given in \eqref{eq:shortA1maps}, satisfying the following conditions:
\begin{itemize}
\item
$\cJ$ is a unital Jordan algebra with the multiplication $a\cdot b$.
\item
$\cT$ is a special unital Jordan module for $\cJ$ with the action $a\bullet x$. That is, the map
$\cJ\rightarrow \End(\cT)^{(+)}$, given by $a\mapsto (x\mapsto a\bullet x)$, is a homomorphism of unital Jordan algebras. (For an associative algebra $\cA$, $\cA^{(+)}$ denotes
the special Jordan algebra with multiplication $a\cdot b=\frac{1}{2}(ab+ba)$.)

In other words, the following equation holds for $a,b\in\cJ$ and $x\in \cT$:
\begin{equation}\label{eq:bullet}
(a\cdot b)\bullet x=\frac{1}{2}\bigl(a\bullet(b\bullet x)+b\bullet(a\bullet x)\bigr).
\end{equation}
\item
For any $a,b,c\in\cJ$ and $x,y,z\in\cT$, the following identities hold:
\begin{gather}
D_{a,b}(c)=a\cdot (b\cdot c)-b\cdot (a\cdot c),\label{eq:Dab}\\
D_{a\cdot b,c}+D_{b\cdot c,a}+D_{c\cdot a,b}=0,\label{eq:Dabc}\\
4D_{a,b}(x)=a\bullet(b\bullet x)-b\bullet(a\bullet x),\\
4D_{a,\langle x\mid y\rangle}=-d_{a\bullet x,y}+d_{x,a\bullet y},\label{eq:Dd}\\
2a\cdot \langle x\mid y\rangle=\langle a\bullet x\mid y\rangle +\langle x \mid a\bullet y\rangle,
\label{eq:axy}\\
d_{x,y}(a)=\langle a\bullet x\mid y\rangle -\langle x\mid a\bullet y\rangle,\\
d_{x,y}(z)-d_{z,y}(x)=\langle x\mid y\rangle \bullet z-\langle z\mid y\rangle\bullet x
  +{\rojo 2\langle x\mid z\rangle\bullet y}.
\end{gather}
\item
For any $D\in\cD$, the linear endomorphism of $\cJ\oplus \cT$, given by $a+x\mapsto D(a)+D(x)$ for
$a\in\cJ$ and $x\in\cT$, is an even derivation of the $\ZZ/2$-graded algebra with even part $\cJ$, odd part $\cT$, and multiplication given by the formula:
\begin{equation}\label{eq:J+T}
(a+x)\diamond(b+y)=\bigl(a\cdot b+\langle x\mid y\rangle\bigr)+\bigl(a\bullet y+b\bullet x\bigr),
\end{equation}
for $a,b\in\cJ$ and $x,y\in\cT$.\qed
\end{itemize}
\end{theorem}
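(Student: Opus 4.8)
The plan is to prove both implications by combining the representation theory of $\SLs(V)$ (Lemma \ref{le:sl2}) with the Jacobi identity. For the \emph{``only if''} direction, start from an inner short $\SLs_2$-structure and its isotypic decomposition \eqref{eq:shortA1_isotypic}. Since the Lie bracket is $\SLs(V)$-equivariant, it splits into graded pieces, one for each ordered pair of component types $\frsl(V)\otimes\cJ$, $V\otimes\cT$, $\cD$, and each such piece is governed by the corresponding $\Hom_{\frsl(V)}$-space in Lemma \ref{le:sl2}. The vanishing statements in item \textbf{(vi)} forbid $[\frsl(V)\otimes\cJ,\frsl(V)\otimes\cJ]$ from having a $V\otimes\cT$-component and $[\frsl(V)\otimes\cJ,V\otimes\cT]$ from having an $\frsl(V)\otimes\cJ$- or $\cD$-component; items \textbf{(i)}--\textbf{(v)} then pin down the surviving pieces up to a bilinear map on $\cJ$, $\cT$, $\cD$: the Lie bracket on $\frsl(V)$ and $\trace$ for $[\frsl(V)\otimes\cJ,\frsl(V)\otimes\cJ]$, the natural action for $[\frsl(V)\otimes\cJ,V\otimes\cT]$, and $\gamma_{u,v}$ and $(.\mid .)$ for $[V\otimes\cT,V\otimes\cT]$ (using $V\otimes V\simeq\frsl(V)\oplus\FF$), while $\cD$ acts on each component without changing its module type. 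This produces exactly the five formulas \eqref{eq:shortA1bracket} and the seven maps \eqref{eq:shortA1maps}; the symmetry types in \eqref{eq:shortA1maps} are forced by matching the (anti)symmetry of each $\frsl(V)$-invariant against the global antisymmetry of the bracket (e.g. $[f,g]$ skew $\Rightarrow$ $a\cdot b$ symmetric, $\trace(fg)$ symmetric $\Rightarrow$ $D_{a,b}$ skew, $\gamma_{u,v}$ symmetric $\Rightarrow$ $\langle x\mid y\rangle$ skew, $(u\mid v)$ skew $\Rightarrow$ $d_{x,y}$ symmetric). Innerness supplies $\iota:\frsl(V)\hookrightarrow\cL$; equivariance forces $\im\iota=\frsl(V)\otimes 1$ for a distinguished $1\in\cJ$, and commutativity of \eqref{eq:iota} yields $1\cdot a=a$, $D_{1,a}=0$, $1\bullet x=x$.

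It then remains, still in the ``only if'' direction, to impose the Jacobi identity on homogeneous triples and read off each displayed identity from the appropriate isotypic projection. Jacobi on three elements of $\frsl(V)\otimes\cJ$ yields the Jordan identity for $\cdot$ together with \eqref{eq:Dab} and \eqref{eq:Dabc}; on two elements of $\frsl(V)\otimes\cJ$ and one of $V\otimes\cT$ it yields \eqref{eq:bullet} and $4D_{a,b}(x)=a\bullet(b\bullet x)-b\bullet(a\bullet x)$; on one element of $\frsl(V)\otimes\cJ$ and two of $V\otimes\cT$ it yields \eqref{eq:Dd}, \eqref{eq:axy}, and $d_{x,y}(a)=\langle a\bullet x\mid y\rangle-\langle x\mid a\bullet y\rangle$; on three elements of $V\otimes\cT$ it yields the last identity; and Jacobi with one factor in $\cD$ says precisely that $\cD$ acts by derivations of the $\ZZ/2$-graded product \eqref{eq:J+T}. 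The scalars $2$ and $4$ are bookkeeping artifacts of the chosen normalizations of $(.\mid .)$ and $\trace$, and here $\charac\FF\neq 2$ is used.

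For the \emph{``if''} direction, one reverses this: given the data, \emph{define} $\cL$ by \eqref{eq:shortA1_isotypic} with bracket \eqref{eq:shortA1bracket}. Anticommutativity is immediate from the symmetry types in \eqref{eq:shortA1maps}. The Jacobi identity is verified on homogeneous triples; because every structure map in \eqref{eq:shortA1bracket} is $\SLs(V)$-equivariant, each Jacobi relation projects onto the $\Hom_{\frsl(V)}$-spaces of Lemma \ref{le:sl2}, each of which is at most one-dimensional, so the relation collapses to a single scalar identity among the maps of \eqref{eq:shortA1maps} — and these are exactly the hypotheses (Jordan algebra, special unital Jordan module, the identities \eqref{eq:Dab}--\eqref{eq:J+T} and its surroundings, and the derivation property of $\cD$). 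Thus $\cL$ is a Lie algebra. The action of $\SLs(V)$ — adjoint on $\frsl(V)$, natural on $V$, trivial on $\cJ,\cT,\cD$ — visibly preserves this bracket, hence gives a homomorphism $\SLs(V)\to\AAut(\cL)$, i.e.\ a short $\SLs_2$-structure by construction; it is inner via $\iota:f\mapsto f\otimes 1$, since $\ad(f\otimes 1)$ acts as $f$ on each tensor factor by the normalizations $1\cdot a=a$, $1\bullet x=x$, so $\ad\circ\iota=\textup{d}\Phi$. (Alternatively, as $\SLs_2$ is simply connected one may recover $\Phi$ by integrating the $\frsl_2$-action, but the explicit automorphisms make this unnecessary.)

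The main obstacle is purely the Jacobi bookkeeping: there are several distinct triples of component-types, and for each one must expand the nested brackets via \eqref{eq:shortA1bracket}, gather terms in each target component, and recognize the resulting $\frsl(V)$-invariant expression as a multiple of the canonical generator of the relevant (one-dimensional) $\Hom_{\frsl(V)}$-space, thereby extracting a scalar identity. The delicate point is tracking the normalizing constants (the $2$'s and $4$'s), which depend on the fixed form $(.\mid .)$ and on the elementary relations satisfied by $\gamma_{u,v}$ and the natural action of $\frsl(V)$ on $V$; once Lemma \ref{le:sl2} is available, everything else is formal.
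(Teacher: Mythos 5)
Your proposal is correct and follows essentially the same route as the paper, which derives \eqref{eq:shortA1bracket}--\eqref{eq:shortA1maps} from $\SLs(V)$-equivariance via Lemma \ref{le:sl2}, gets the unit $1\in\cJ$ from innerness, and then defers the verification that the Jacobi identity is equivalent to the listed identities (and the converse construction) to \cite[Theorem 2.2]{EO}, exactly the computation you outline. The only minor imprecision is your claim that each Jacobi relation collapses against a one-dimensional invariant space: some of the relevant trilinear invariant spaces (e.g.\ maps $V\otimes V\otimes V\to V$) are two-dimensional, so a single triple of component types can yield more than one scalar identity, which is consistent with the stated list and does not affect the argument.
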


As remarked in \cite{EO}, all this is strongly related to the $J$-ternary algebras considered by 
Allison \cite{Allison}.

\begin{definition}[{\cite[(3.12)]{AllisonBenkartGao02}}]\label{df:Jternary} Let $\cJ$ be a unital Jordan algebra with multiplication $a\cdot b$ for $a,b\in\cJ$. Let $\cT$ be a unital special Jordan module for $\cJ$ with action $a\bullet x$ for $a\in \cJ$ and $x\in \cT$. Assume $\langle.\mid.\rangle:\cT\times \cT\rightarrow \cJ$ is a skew-symmetric bilinear map and $\ptriple{.,.,.}:\cT\times \cT\times \cT\rightarrow \cT$ is a trilinear product on $\cT$. Then the pair $(\cJ,\cT)$ is called a \emph{$J$-ternary algebra} if the following axioms hold for any $a\in \cJ$ and $x,y,z,w,v\in \cT$:
\begin{description}
\addtolength{\itemindent}{-22pt}
\item[(JT1)] $a\cdot\langle x\mid y\rangle =\dfrac{1}{2}\bigl(\langle a\bullet x\mid y\rangle 
+\langle x\mid a\bullet y\rangle\bigr)$,

\item[(JT2)] $a\bullet\ptriple{ x,y,z}=\ptriple{ a\bullet x,y,z}-\ptriple{ x,a\bullet y,z} +\ptriple{x,y,a\bullet z}$,

\item[(JT3)] $\ptriple{x,y,z}=\ptriple{z,y,x}-\langle x\mid z\rangle\bullet y$,

\item[(JT4)] $\ptriple{x,y,z}=\ptriple{y,x,z}+\langle x\mid y\rangle\bullet z$,

\item[(JT5)] $\langle\ptriple{x,y,z}\mid w\rangle+\langle z\mid\ptriple{x,y,w}\rangle =\langle x\mid\langle z\mid w\rangle\bullet y\rangle$,

\item[(JT6)] $\ptriple{x,y,\ptriple{z,w,v}}=\ptriple{\ptriple{x,y,z},w,v}+
    \ptriple{z,\ptriple{y,x,w},v}+\ptriple{z,w,\ptriple{x,y,{\rojo v}}}$.
\end{description}
\end{definition}

Theorem 2.4 in \cite{EO} becomes now the following result:

\begin{theorem}\label{th:SL2Jternary}
Let $\cL$ be a Lie algebra endowed with {\rojo a short} inner $\SLs_2$-structure with isotypic decomposition in 
\eqref{eq:shortA1_isotypic}. Then the pair $(\cJ,\cT)$ is a $J$-ternary algebra with the triple product
on $\cT$ given by the next formula
\begin{equation}\label{eq:Jternary}
\ptriple{x,y,z}=\frac{1}{2}\Bigl(-d_{x,y}(z)+\langle x\mid y\rangle\bullet z\Bigr).
\end{equation}
for all $x,y,z\in\cT$, {\rojo where $\langle\,\mid\,\rangle$, $\bullet$, and $d_{x,y}$ are defined
as in \eqref{eq:shortA1bracket} and \eqref{eq:shortA1maps}.}

Conversely, if $(\cJ,\cT)$ is a $J$-ternary algebra {\rojo with bilinear maps $a\cdot b$, $a\bullet x$, $\langle x\mid y\rangle$, and trilinear map $\ptriple{x,y,z}$, as in Definition \ref{df:Jternary}}, 
then the vector space
\begin{equation}\label{eq:LJT}
\cL(\cJ,\cT)\bydef (\frsl(V)\otimes\cJ)\oplus(V\otimes\cT)\oplus\cD,
\end{equation}
is a Lie algebra with an inner $\SLs_2$-structure with the bracket defined as in \eqref{eq:shortA1bracket}, and
where $\cD$ is the subalgebra of the Lie algebra of even derivations of the $\ZZ/2$-graded algebra
$\cJ\oplus\cT$ defined in \eqref{eq:J+T} spanned  by the maps $D_{a,b}$ for $a,b\in \cJ$, and $d_{x,y}$ for $x,y\in\cT$, defined as follows:
\begin{equation}\label{eq:defDd}
\begin{split}
D_{a,b}(c)&=a\cdot(b\cdot c)-b\cdot(a\cdot c),\\
D_{a,b}(x)&=\frac{1}{4}\bigl(a\bullet(b\bullet x)-b\bullet (a\bullet x)\bigr),\\
d_{x,y}(a)&=\langle a\bullet x\mid y\rangle -\langle x\mid a\bullet y\rangle,\\
d_{x,y}(z)&=\langle x\mid y\rangle\bullet z-2\ptriple{x,y,z},
\end{split}
\end{equation}
for $a,b,c\in\cJ$ and $x,y,z\in\cT$.
\end{theorem}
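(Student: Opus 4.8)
The plan is to derive both implications from Theorem~\ref{th:ShortA1Properties}, which already encodes \emph{all} the content of the Jacobi identity for a Lie algebra carrying an inner short $\SLs_2$-structure: that $\cJ$ is a unital Jordan algebra, $\cT$ a unital special Jordan $\cJ$-module, the bilinear maps of \eqref{eq:shortA1maps} satisfy the displayed list of identities, and each $D\in\cD$ is an even derivation of the $\ZZ/2$-graded algebra $(\cJ\oplus\cT,\diamond)$ of \eqref{eq:J+T}. Once this is in hand, the statement is essentially a dictionary translation between those data and the axioms (JT1)--(JT6), with the triple product read off from \eqref{eq:Jternary}; this is how \cite[Theorem~2.4]{EO} is obtained, and I would follow the same route.

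For the first implication, assume the inner short $\SLs_2$-structure, keep the notation and identities of Theorem~\ref{th:ShortA1Properties}, and set $\ptriple{x,y,z}=\frac{1}{2}\bigl(-d_{x,y}(z)+\langle x\vert y\rangle\bullet z\bigr)$. Then (JT1) is identity \eqref{eq:axy} divided by $2$. Axioms (JT3) and (JT4) follow by feeding this definition into the skew-symmetry of $\langle\cdot\vert\cdot\rangle$, the symmetry of $(x,y)\mapsto d_{x,y}$, and the identity of Theorem~\ref{th:ShortA1Properties} expressing $d_{x,y}(z)-d_{z,y}(x)$ through $\bullet$. Axioms (JT2) and (JT5) come from the remaining quadratic identities — those relating $\bullet$, $\langle\cdot\vert\cdot\rangle$, $D_{a,b}$ and $d_{x,y}$, in particular \eqref{eq:Dd} — again by substituting \eqref{eq:Jternary} and using (JT1). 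Axiom (JT6) is the only genuinely five-linear identity; it is equivalent to the operator statement that $[d_{x,y},d_{z,w}]$ again lies in $\cD$ and acts on $\cT$ by $d_{d_{x,y}(z),w}+d_{z,d_{x,y}(w)}$ (the $\cD$-component of the Jacobi identity for two elements of $V\otimes\cT$ and the element $d_{z,w}$ of $\cD$), which one unwinds using $d_{p,q}(v)=\langle p\vert q\rangle\bullet v-2\ptriple{p,q,v}$ and tidies up with (JT3)--(JT4).

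For the converse, let $(\cJ,\cT)$ be a $J$-ternary algebra, define $D_{a,b}$ and $d_{x,y}$ by \eqref{eq:defDd}, and put $\cD\bydef\lspan{D_{a,b},\,d_{x,y}}$. The strategy is to verify the hypotheses of Theorem~\ref{th:ShortA1Properties} and then invoke it. First, each $D_{a,b}$ and each $d_{x,y}$ is an even derivation of $(\cJ\oplus\cT,\diamond)$: on $\cJ$ this is the classical fact that $[L_a,L_b]$ derives a Jordan algebra, on $\cT$ it is the analogous statement for the special module (with (JT1) providing the compatibility with $\langle\cdot\vert\cdot\rangle$), and for $d_{x,y}$ one uses (JT1)--(JT5). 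Second, $\cD$ is closed under the commutator of endomorphisms: $[D_{a,b},D_{c,d}]$ lies in the span by the linearized Jordan identity, $[D_{a,b},d_{x,y}]$ by (JT2), and $[d_{x,y},d_{z,w}]$ by (JT6) — in each case the bracket equals a sum of $D$'s or $d$'s with transported arguments. Third, the remaining identities listed in Theorem~\ref{th:ShortA1Properties} reduce to the Jordan axioms, the special-module axiom \eqref{eq:bullet}, and (JT1)--(JT6), several of them (such as \eqref{eq:Dab} and the formulas in \eqref{eq:defDd}) being true by definition. Then Theorem~\ref{th:ShortA1Properties} yields that $\cL(\cJ,\cT)$ with bracket \eqref{eq:shortA1bracket} is a Lie algebra with an inner short $\SLs_2$-structure — inner because the unit $1\in\cJ$ gives the embedding $\iota:\frsl(V)\hookrightarrow\cL(\cJ,\cT)$, $f\mapsto f\otimes 1$, of \eqref{eq:iota} — and the original triple product is recovered as \eqref{eq:Jternary} by the last line of \eqref{eq:defDd}, so the two constructions are mutually inverse.

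The main obstacle is axiom (JT6), equivalently the assertion that $\cD$ is closed with $[d_{x,y},d_{z,w}]$ acting on $\cT$ by $d_{d_{x,y}(z),w}+d_{z,d_{x,y}(w)}$: this is the only step where the full trilinear product and its higher axiom are genuinely used, and it carries the weight of the computation. All the other steps collapse, via the $\frsl(V)$-representation theory packaged in Lemma~\ref{le:sl2} (which is precisely what makes Theorem~\ref{th:ShortA1Properties} applicable) together with routine Jordan-algebra manipulation, to straightforward bookkeeping; indeed one could simply quote \cite[Theorem~2.4]{EO}, the present statement being its reformulation in the language of $J$-ternary algebras.
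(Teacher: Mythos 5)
Your proposal is correct and follows essentially the same route as the paper, which offers no independent argument for this theorem: it is presented as the reformulation of \cite[Theorem~2.4]{EO}, itself obtained by translating the identities of Theorem~\ref{th:ShortA1Properties} into the axioms (JT1)--(JT6) exactly as you outline (including the reduction of (JT6) to the statement that the maps $d_{x,y}$ act as derivations of the structure maps, and the converse via verifying the hypotheses of Theorem~\ref{th:ShortA1Properties} with $\cD$ spanned by the $D_{a,b}$ and $d_{x,y}$). So your dictionary-style proof, or simply quoting \cite[Theorem~2.4]{EO}, matches the paper's treatment.
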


Under the conditions of Theorem \ref{th:SL2Jternary}, the $J$-ternary algebra $(\cJ,\cT)$ is said to \emph{coordinatize} the $\SLs_2$-structure on $\cL$.

\smallskip

{\rojo A word of caution is needed here. If $\cL$ is a Lie algebra endowed with an inner 
$\SLs_2$-structure as
in  Theorem \ref{th:SL2Jternary}, and $(\cJ,\cT)$ is the associated $J$-ternary algebra that 
coordinatizes it, the
Lie algebra $\cL(\cJ,\cT)$ is not necessarily isomorphic to our original $\cL$. It may even fail to be isomorphic to a subalgebra of $\cL$. Actually, $\cL(\cJ,\cT)$  is  \emph{centrally isogenous} 
to the subalgebra of the original $\cL$
generated by the isotypic components $\frsl(V)\otimes\cJ$ and $V\otimes\cT$. That is, their universal 
central extensions coincide (see \cite{BermanMoody,AllisonBenkartGao02} and the references
therein). We will not go into
details.}

\bigskip
%--------------------------------

\section{Short \texorpdfstring{$(\SLs_2\times\SLs_2)$}{SL2xSL2}-structures}\label{se:A1A1}

We start by introducing a new class of $\mathbf{S}$-structures: the short 
$(\SLs_2\times\SLs_2)$-structures.

\begin{definition}\label{df:shortA1A1}
An $(\SLs_2\times\SLs_2)$-structure $\Phi:\SLs_2\times\SLs_2\rightarrow \AAut(\cL)$ on a Lie algebra 
$\cL$ is said to be \emph{short} if $\cL$ decomposes, as a module for $\SLs_2\times\SLs_2$ via 
$\Phi$, into a direct sum of copies of the following modules:
\begin{itemize}
\item the adjoint module for each of the two copies of $\SLs_2$, 
\item the natural two-dimensional modules $V_1$ and $V_0$ for each of the two copies of $\SLs_2$
(the weird numbering $1,0$ is justified by the Peirce decomposition in \eqref{eq:Peirce}),
\item the tensor product $V_1\otimes V_0$, and
\item the trivial one-dimensional module.
\end{itemize}
\end{definition}

{\rojo
\begin{remark}
Due to Lemma \ref{le:sl2}, where $\SLs_2$-invariance may be substituted by $\frsl_2$-invariance,
the reader who feels uncomfortable with affine group schemes may consider an alternate 
definition of \emph{short $\frsl_2\oplus\frsl_2$-structure} on a Lie algebra $\cL$, as a homomorphism
of Lie algebras $\Psi:\frsl_2\oplus\frsl_2\rightarrow \Der(\cL)$ such that, as a module for
$\frsl_2\oplus\frsl_2$ via $\Psi$, $\cL$ decomposes into a direct sum of copies of the adjoint module
for each of the two copies of $\frsl_2$, of the natural two-dimensional modules $V_1$ and $V_0$
 for each of the two
copies of $\frsl_2$, of the tensor product $V_1\otimes V_0$, and of the trivial one-dimensional
module.
\end{remark}}

\medskip

{\rojo Assume that  $\Phi:\SLs_2\times\SLs_2\rightarrow \cL$ is an inner short $(\SLs_2\times\SLs_2)$-structure 
on the Lie algebra $\cL$, then 
its isotypic decomposition} allows us to describe $\cL$ as follows:
\begin{equation}\label{eq:A1A1isotypic}
\cL=\bigl(\frsl(V_1)\otimes \cJ_1\bigr)\oplus
      \bigl(\frsl(V_0)\otimes \cJ_0\bigr)\oplus
      \bigl((V_1\otimes V_0)\otimes \cJ_{\frac12}\bigr)\oplus
      \bigl(V_1\otimes \cT_1\bigr)\oplus
      \bigl(V_0\otimes \cT_0\bigr)\oplus \cS.
\end{equation}

Note that the invariance of the bracket under the action of $\SLs_2\times\SLs_2\simeq
\SLs(V_1)\times\SLs(V_0)$ forces that $\bigl(\frsl(V_i)\otimes \cJ_i\bigr)\oplus
 \bigl(V_i\otimes \cT_i\bigr)\oplus \cS$ is a subalgebra with a short inner $\SLs_2$-structure, for $i=0,1$,
and hence that the bracket in $\cL$ induces a structure of $J$-ternary algebra on $(\cJ_i,\cT_i)$ for 
$i=1,0$.

In order to get more information, {\rojo it is not a good idea to expand blindly the Lie bracket
on $\cL$. Instead, consider} the diagonal subgroup of $\SLs_2\times\SLs_2$.
The composition
\begin{equation}\label{eq:Delta}
\SLs_2\xrightarrow{\Delta}\SLs_2\times\SLs_2\xrightarrow{\Phi}\AAut(\cL),
\end{equation}
where $\Delta$ is the diagonal embedding $g\mapsto (g,g)$, 
gives an inner $\SLs_2$-structure on $\cL$. This structure is short because, as $\SLs_2$-modules
via $\Phi\circ\Delta$, $\frsl(V_i)$ is the adjoint module and $V_i$ is the two-dimensional natural module, for $i=1,0$, and $V_1\otimes V_0$ decomposes as the direct sum of an adjoint module and a trivial module. {\rojo (See items (iv) and (v) of Lemma \ref{le:sl2}.)}

The next theorem is our main result. It describes the short inner $(\SLs_2\times\SLs_2)$-structures
on a Lie algebra {\rojo in terms of} $J$-ternary algebras $(\cJ,\cT)$ where the unital Jordan algebra $\cJ$ contains a nontrivial distinguished idempotent.

\begin{theorem}\label{th:A1A1e}
Let $\cL$ be a Lie algebra endowed with {\rojo a short} inner $(\SLs_2\times\SLs_2)$-structure 
$\Phi:\SLs_2\times\SLs_2\rightarrow \AAut(\cL)$. Let $(\cJ,\cT)$ be the $J$-ternary algebra that
coordinatizes the short $\SLs_2$-structure $\Psi=\Phi\circ\Delta$ in \eqref{eq:Delta}, with isotypic decomposition given by \eqref{eq:shortA1_isotypic}:
\begin{equation}\label{eq:shortA1_isotypic_bis}
\cL=\bigl(\frsl(V)\otimes \cJ\bigr)\oplus \bigl(V\otimes\cT\bigr)\oplus\cD,
\end{equation}
where $V$ is the two-dimensional natural module for $\SLs_2$.

Then the
unital Jordan algebra $\cJ$ contains an idempotent $e=e^{\cdot 2}\neq 0,1$ such that the image
of the Lie algebra homomorphism $\iota$ in  \eqref{eq:iota} is 
\[
\bigl(\frsl(V)\otimes e\bigr)\oplus\bigl(\frsl(V)\otimes (1-e)\bigr).
\]

Conversely, if $\cL$ is a Lie algebra endowed with an inner $\SLs_2$-structure 
$\Psi:\SLs_2\rightarrow\AAut(\cL)$ with isotypic decomposition in \eqref{eq:shortA1_isotypic_bis}, 
and such that the unital Jordan algebra $\cJ$ contains a nontrivial idempotent $e$, 
then $\cL$ is endowed with a short inner $(\SLs_2\times\SLs_2)$-structure whose
isotypic decomposition is the following:
\begin{multline}\label{eq:A1A1_e_isotypic}
\cL=\bigl(\frsl(V)\otimes\cJ_1\bigr)\oplus\bigl(\frsl(V)\otimes\cJ_0\bigr)\oplus
    \Bigl(\bigl(\frsl(V)\otimes\cJ_{\frac12}\bigr)\oplus D_{e,\cJ_{\frac12}}\Bigr)\\
    \oplus
    \bigl(V\otimes\cT_1\bigr)\oplus\bigl(V\otimes\cT_0\bigr)\oplus\cS,
\end{multline}
where $\cJ=\cJ_1\oplus\cJ_{\frac12}\oplus\cJ_0$ is the Peirce decomposition of $\cJ$ relative to the idempotent $e$:
\begin{equation}\label{eq:Peirce}
\cJ_1=\{a\in\cJ\mid e\cdot a=a\},\quad 
\cJ_{\frac12}=\{a\in\cJ\mid e\cdot a=\frac12 a\},\quad
\cJ_0=\{a\in\cJ\mid e\cdot a=0\},
\end{equation}
$\cT=\cT_1\oplus\cT_0$ is the induced decomposition on $\cT$:
\begin{equation}\label{eq:TT0T1}
\cT_1=\{x\in\cT\mid e\bullet x=x\},\quad \cT_0=\{x\in\cT\mid e\bullet x=0\},
\end{equation}
and where the following assertions hold:
\begin{itemize}
\item 
for any $a\in\cJ_1$, $\frsl(V)\otimes a$ is a copy of the adjoint module for the first copy of 
$\SLs_2$,
\item 
 for any $a\in\cJ_0$, $\frsl(V)\otimes a$ is a copy of the adjoint module for the second copy of 
$\SLs_2$,
\item
 for any $a\in\cJ_{\frac12}$, $\bigl(\frsl(V)\otimes a\bigr)\oplus\FF D_{e,a}$ is a copy of the tensor product of the natural module for the first copy of 
$\SLs_2$ and the natural module for the second copy of $\SLs_2$,
\item
 for any $x\in\cT_1$, $V\otimes x$ is a copy of the natural module for the first copy of 
$\SLs_2$,
\item
 for any $x\in\cT_0$, $V\otimes x$ is a copy of the natural module for the second copy of 
$\SLs_2$,
\item
$\cS$ is the subspace of fixed elements by $\SLs_2\times\SLs_2$. Moreover, the subspace
of fixed elements by $\SLs_2$ under $\Psi=\Phi\circ\Delta$ is the direct sum 
$\cD=\cS\oplus D_{e,\cJ_{\frac12}}$.
\end{itemize}
\end{theorem}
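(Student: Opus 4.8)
For the direct statement, the plan is to locate the idempotent inside the adjoint‑isotypic summand $\frsl(V)\otimes\cJ$ of \eqref{eq:shortA1_isotypic_bis}. Let $\iota\colon\frsl_2\times\frsl_2\hookrightarrow\cL$ be the embedding attached to $\Phi$ by diagram \eqref{eq:iota}; since $\frsl_2\times\frsl_2$ is perfect, $\iota$ is the unique Lie algebra map with $\ad\circ\iota=\dc\Phi$, it is $(\SLs_2\times\SLs_2)$‑equivariant, and $\iota\circ\dc\Delta$ (with $\dc\Delta\colon X\mapsto(X,X)$) is the embedding attached to $\Psi=\Phi\circ\Delta$, so by the coordinatization recalled in Section~\ref{se:A1} one has $\iota(X,X)=\rho(X)\otimes 1$ for a Lie algebra isomorphism $\rho\colon\frsl_2\xrightarrow{\ \sim\ }\frsl(V)$ and the unit $1\in\cJ$. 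Under $\Psi$ the subspaces $\frsl_2\times 0$ and $0\times\frsl_2$ of $\frsl_2\times\frsl_2$ are copies of the adjoint module, hence so are their images $\iota(\frsl_2\times 0),\ \iota(0\times\frsl_2)$, which therefore lie in $\frsl(V)\otimes\cJ$ and, by Schur's lemma (cf.\ Lemma~\ref{le:sl2}\,\textbf{(i)}), may be written $\iota(X,0)=\rho(X)\otimes e_1$, $\iota(0,X)=\rho(X)\otimes e_0$ for some $e_1,e_0\in\cJ$. Evaluating at the diagonal gives $e_1+e_0=1$, and expanding $\iota([X,Y],0)=[\iota(X,0),\iota(Y,0)]$ with \eqref{eq:shortA1bracket} — the $D$‑term dropping out by skew‑symmetry of $D_{\cdot,\cdot}$ — together with $\rho([\frsl_2,\frsl_2])=\frsl(V)$ yields $e_1\cdot e_1=e_1$. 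Thus $e:=e_1$ is idempotent, $e\neq 0$ by injectivity of $\iota$ and $e\neq 1$ since $e_0=1-e\neq 0$, and $\im\iota=\bigl(\frsl(V)\otimes e\bigr)\oplus\bigl(\frsl(V)\otimes(1-e)\bigr)$.

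For the converse I would first record two facts about the Peirce decomposition of the given $\cJ$ and its module $\cT$. Applying \eqref{eq:bullet} with $a=b=e$ gives $e\bullet(e\bullet x)=e\bullet x$, so $x\mapsto e\bullet x$ is idempotent and $\cT=\cT_1\oplus\cT_0$ as in \eqref{eq:TT0T1}, with no $\tfrac12$‑part; and applying \eqref{eq:Dabc} to the triples $(e,e,a)$ and $(1-e,1-e,a)$, using $D_{1,a}=0$ and skew‑symmetry, gives $D_{e,a}=0$ for every $a\in\cJ_1\cup\cJ_0$. Next, define $\iota\colon\frsl_2\times\frsl_2\to\cL$ by $(X,Y)\mapsto\rho(X)\otimes e+\rho(Y)\otimes(1-e)$; it is injective because $e,1-e$ are linearly independent and $\rho$ is injective, it restricts to $\iota_\Psi$ on the diagonal, and it is a Lie algebra homomorphism by \eqref{eq:shortA1bracket}, since $e\cdot e=e$, $(1-e)\cdot(1-e)=1-e$, $e\cdot(1-e)=0$ and $D_{e,e}=D_{1-e,1-e}=D_{e,1-e}=0$. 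To integrate $\ad\circ\iota$ to a homomorphism of algebraic groups one checks, again from \eqref{eq:shortA1bracket} and the Peirce rules, that both $\ad(\frsl(V)\otimes e)$ and $\ad(\frsl(V)\otimes(1-e))$ preserve the finite‑dimensional subspaces $\frsl(V)\otimes a$ ($a\in\cJ_1\cup\cJ_0$), $V\otimes x$ ($x\in\cT$), $\cS:=\cL^{\SLs_2\times\SLs_2}$, and $M_a:=\bigl(\frsl(V)\otimes a\bigr)\oplus\FF D_{e,a}$ ($a\in\cJ_{\frac12}$), acting on each through a sum of copies of the adjoint, the natural, the trivial, and a single four‑dimensional $\SLs_2\times\SLs_2$‑module respectively; hence the root elements act locally nilpotently, each $\frsl_2$ integrates to an $\SLs_2$ exactly as in the construction of $\cL(\cJ,\cT)$ in Theorem~\ref{th:SL2Jternary}, the two actions commute because the two subalgebras commute in $\cL$, and together they give an inner $(\SLs_2\times\SLs_2)$‑structure $\Phi$ with $\Phi\circ\Delta=\Psi$.

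It then remains to match these invariant subspaces with the claimed decomposition \eqref{eq:A1A1_e_isotypic}. For $a\in\cJ_1$ (resp.\ $\cJ_0$), $D_{e,a}=0$ and $e\cdot a=a$ (resp.\ $0$) make $\frsl(V)\otimes a$ an adjoint module for the first (resp.\ second) copy of $\SLs_2$ and trivial for the other. For $a\in\cJ_{\frac12}$, a direct computation using $e\cdot a=(1-e)\cdot a=\tfrac12 a$, $D_{e,a}(e)=-\tfrac14 a$ and $D_{1-e,a}=-D_{e,a}$ shows that the four weight vectors of $M_a$ — namely $\rho(x)\otimes a$, $\rho(y)\otimes a$, $\rho(h)\otimes a\pm 4D_{e,a}$, for an $\frsl_2$‑triple $h,x,y$ spanning $\frsl(V)$ — have weights $(1,1),(-1,-1),(1,-1),(-1,1)$ for the two Cartan subalgebras $\rho(h)\otimes e$ and $\rho(h)\otimes(1-e)$, and that $M_a$ is generated by its $(1,1)$‑vector, so $M_a\cong V_1\otimes V_0$; since also $a\mapsto D_{e,a}$ is injective on $\cJ_{\frac12}$, this gives $\bigoplus_{a}M_a=\bigl(\frsl(V)\otimes\cJ_{\frac12}\bigr)\oplus D_{e,\cJ_{\frac12}}$. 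For $x\in\cT_1$ (resp.\ $\cT_0$), $e\bullet x=x$ (resp.\ $0$) makes $V\otimes x$ a natural module for the first (resp.\ second) copy and trivial for the other. Finally, for $D\in\cD$, being a derivation of the unital algebra $\cJ$ forces $D(1)=0$ and $D(e)\in\cJ_{\frac12}$; with $b:=-4D(e)$ one checks from \eqref{eq:shortA1bracket} that $D-D_{e,b}$ is annihilated by $\frsl(V)\otimes e$ and by $\frsl(V)\otimes(1-e)$, hence lies in $\cS$, while $\cS\cap D_{e,\cJ_{\frac12}}=0$; therefore $\cD=\cS\oplus D_{e,\cJ_{\frac12}}$, and collecting everything yields \eqref{eq:A1A1_e_isotypic} and the itemized assertions.

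The step I expect to require the most care is the integration of the $\frsl_2$‑actions to actions of the algebraic group $\SLs_2$: over a field of characteristic $p>3$ the group $\SLs_2$ is not linearly reductive, so one cannot simply invoke complete reducibility to break $\cL$ into finite‑dimensional pieces, and one must exhibit by hand the invariant subspaces above, on which the $\frsl_2$‑triples act through the (finite‑dimensional, restricted) natural, adjoint and trivial modules and through $V_1\otimes V_0$, all of which integrate classically. The same bookkeeping — using that the weights $0,\pm1,\pm2$ that occur are pairwise distinct modulo $p$ — is needed to pass from ``annihilated by the Lie algebra'' to ``fixed by the group'' in the last step of the previous paragraph. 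All the remaining verifications are routine manipulations with the bracket \eqref{eq:shortA1bracket} and the Peirce relations, using the dictionary between short $\SLs_2$‑structures and $J$‑ternary algebras established in Section~\ref{se:A1}.
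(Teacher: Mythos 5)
Your proposal is correct and follows essentially the same route as the paper: in the direct part you extract a pair of orthogonal idempotents from the image of $\iota$ inside the adjoint-isotypic component $\frsl(V)\otimes\cJ$, and in the converse you use the Peirce decomposition, the identity $D_{e,a}(e)=-\tfrac14 a$ (giving $\cD=\cS\oplus D_{e,\cJ_{\frac12}}$ and the injectivity of $a\mapsto D_{e,a}$), and the identification of $\bigl(\frsl(V)\otimes a\bigr)\oplus\FF D_{e,a}$ with the tensor product of the two natural modules — which the paper carries out via $\frgl(V)\simeq V\otimes V$ with the action $(f_1,f_2)\cdot g=f_1g-gf_2$, while you do it by an explicit weight computation, a purely presentational difference. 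The remaining divergences (your uniqueness/equivariance argument for $\iota$, the explicit derivation of $D_{e,\cJ_1}=D_{e,\cJ_0}=0$, and the care about integrating the $\frsl_2\oplus\frsl_2$-action to $\SLs_2\times\SLs_2$) only make explicit steps the paper treats briefly or defers to Remark~\ref{re:consequences}.
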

\begin{proof}
For the first part, as the $(\SLs_2\times\SLs_2)$-structure $\Phi$ is inner, $\cL$ contains a subalgebra isomorphic to 
$\frsl_2\oplus\frsl_2$, which in the decomposition \eqref{eq:shortA1_isotypic_bis} appears as
\[
\bigl(\frsl(V)\otimes e_1\bigr)\oplus\bigl(\frsl(V)\otimes e_2\bigr),
\]
for orthogonal idempotents $e_1,e_2\in \cJ$; that is, $e_1^{\cdot 2}=e_1$, 
$e_2^{\cdot 2}=e_2$, $e_1\cdot e_2=0$ and $1=e_1+e_2$. Now it is enough to take $e=e_1$.

Conversely, let $\cL$ be a Lie algebra endowed with {\rojo a short} inner $\SLs_2$-structure 
$\Psi:\SLs_2\rightarrow\AAut(\cL)$ with isotypic decomposition as in \eqref{eq:shortA1_isotypic_bis}.
Let $e\in\cJ$ be a nontrivial idempotent (i.e., $e=e^{\cdot 2}\neq 0,1$), and let $\cJ=\cJ_1\oplus\cJ_{\frac12}\oplus\cJ_0$ be the corresponding Peirce decomposition as in \eqref{eq:Peirce}.
Equation \eqref{eq:bullet} gives $e\bullet x=e\bullet(e\bullet x)$ for any $x\in\cT$, so that 
$\cT$ decomposes as in \eqref{eq:TT0T1}.

\smallskip

We need a short digression before continuing with the proof. The vector space $\frgl(V)$ of linear endomorphisms of our two-dimensional vector space $V$ is a module for $\frsl(V)\oplus\frsl(V)$ by means of the action
\begin{equation}\label{eq:sl2sl2action}
(f_1,f_2)\cdot g\bydef f_1 g-g f_2
\end{equation}
for $f_1,f_2\in\frsl(V)$ and $g\in\frgl(V)$. Moreover, once we fix, as we have done before, a nonzero 
skew-symmetric bilinear form $(.\mid .)$ on $V$, $\frgl(V)$ is isomorphic to $V\otimes V$ by means of the linear map determined as follows:
\[
u\otimes v\mapsto u(v\mid .)\bigl(: w\mapsto (v\mid w)u\bigr),
\]
for $u,v,w\in V$. This linear isomorphism is an isomorphism of $\frsl(V)\oplus\frsl(V)$-modules,
where the first (resp. second) copy of $\frsl(V)$ acts on the first (resp. second) copy of $V$ in 
$V\otimes V$.

Also note that for any $f\in \frsl(V)$, the Cayley-Hamilton equation gives $f^2=-\det(f)\id=\frac12\trace(f^2)\id$, and hence, by linearization we get
\begin{equation}\label{eq:fg+gf}
fg+gf=\trace(fg)\id
\end{equation}
for any $f,g\in\frsl(V)$. On the other hand, $[f,g]=fg-gf$ which, together with \eqref{eq:fg+gf} gives
the formulas:
\begin{equation}\label{eq:fggf}
fg=\frac12\bigl([f,g]+\trace(fg)\id\bigr), \quad gf=\frac12\bigl(-[f,g]+\trace(fg)\id\bigr)
\end{equation}
for $f,g\in\frsl(V)$, and hence we have
\begin{equation}\label{eq:f1g-gf2}
(f_1,f_2)\cdot g=f_1g-gf_2=\frac12\bigl([{\rojo f_1+f_2},g]+\trace(({\rojo f_1-f_2})g)\id\bigr)
\end{equation}
for any $f_1,f_2,g\in\frsl(V)$.

(Note that $\frsl(V)\oplus\frsl(V)$ can be substituted by $\SLs(V)\times\SLs(V)$ above, with
\eqref{eq:sl2sl2action} changed to $(f_1,f_2)\cdot g=f_1gf_2^{-1}$, for rational points 
$f_1,f_2\in\SLs(V)$ and $g\in\frgl(V)$.)

\smallskip

Now, for $\cL$ as above, endowed with a short inner $\SLs_2$-structure $\Psi$, \eqref{eq:shortA1bracket} gives, for any $f_1,f_2,g\in\frsl(V)$ and $a\in\cJ_{\frac12}$, the following:
\[
\begin{split}
[f_1\otimes e+f_2\otimes(1-e),g\otimes a]&={\rojo \frac12[f_1+f_2,g]\otimes a}+2\trace((f_1-f_2)g)D_{e,a},\\
[f_1\otimes e+f_2\otimes(1-e),D_{e,a}]&=\frac14 (f_1-f_2)\otimes a,
\end{split}
\]
{\rojo where we have used $D_{1,a}=0$ and $D_{e,a}(e)=-\frac{1}{4}a$, which follow from \eqref{eq:D1a} and \eqref{eq:Dab}.}

Comparing with \eqref{eq:f1g-gf2}, it turns out that the linear map
\[
\begin{split}
\bigl(\frsl(V)\otimes a\bigr)\oplus \FF D_{e,a}&\longrightarrow \frgl(V)\\
f\otimes a\qquad &\mapsto\quad f,\\
D_{e,a}\qquad &\mapsto\quad \frac14 \id,
\end{split}
\]
is an isomorphism of $\frsl(V)\oplus \frsl(V)\simeq \bigl(\frsl(V)\otimes e\bigr)\oplus
\bigl(\frsl(V)\otimes (1-e)\bigr)$-modules.

On the other hand, for any $a\in\cJ_{\frac12}$, Equation \eqref{eq:Dab} gives 
$D_{e,a}(e)=\left(\frac14 -\frac12\right) a=-\frac14 a$, so the linear map 
\[
\cJ_{\frac12}\rightarrow \cD,\quad a\mapsto D_{e,a}
\]
is one-to-one, and $\cD$ decomposes as $\cD=D_{e,\cJ_{\frac12}}\oplus \cS$, with 
\[
\cS=\{D\in\cD\mid D(e)=0\}.
\]
It then follows that $\cS$ is the centralizer of the subalgebra
$\bigl(\frsl(V)\otimes e\bigr)\oplus
\bigl(\frsl(V)\otimes (1-e)\bigr)\simeq \frsl(V)\oplus \frsl(V)$.

Therefore, the decomposition in \eqref{eq:A1A1_e_isotypic} gives a decomposition of $\cL$, as a module for $\frsl_2\oplus \frsl_2\simeq \bigl(\frsl(V)\otimes e\bigr)\oplus
\bigl(\frsl(V)\otimes (1-e)\bigr)$,
into a direct sum of copies of the adjoint module for the first copy of
$\frsl_2$, copies of the adjoint module for the second copy of $\frsl_2$, copies of the tensor product 
of the natural modules for the two copies of $\frsl_2$, copies of the natural module for the first copy of $\frsl_2$, copies for the natural module for the second copy of $\frsl_2$, and copies of the trivial one-dimensional module for $\frsl_2\oplus\frsl_2$. This shows that $\cL$ is endowed with a short inner $(\SLs_2\times\SLs_2)$-structure.
\end{proof}

\begin{remark}\label{re:consequences}
Let $\cL$ be a Lie algebra endowed with {\rojo a short} inner $\SLs_2$-structure 
$\Psi:\SLs_2\rightarrow\AAut(\cL)$ with isotypic decomposition as in \eqref{eq:shortA1_isotypic_bis}, 
and such that the unital Jordan algebra $\cJ$ contains a nontrivial idempotent $e$. Then 
\eqref{eq:Dabc} with $c=e$, $a\in \cJ_1$, and $b\in\cJ_0$ gives 
\[
D_{\cJ_1,\cJ_0}=0.
\]
and now, with $b=c=e$ and $a\in \cJ_1$, gives $D_{e,\cJ_1}=0$. {\rojo (This should be familiar to 
experts in Jordan algebras, but note that here $D_{a,b}$ is an element of the subalgebra $\cD$, which
acts by derivations on $\cJ$, but it is not, in general, the Lie algebra of derivations of $\cJ$.)}

Also, \eqref{eq:axy} with $a=e$ gives:
\[
\langle \cT_i\mid\cT_i\rangle\subseteq \cJ_i\ (i=1,0),\qquad \langle\cT_1\mid\cT_0\rangle \subseteq\cJ_{\frac12},
\]
and \eqref{eq:Dd} with $a=e$ shows $d_{x.y}=4D_{e,\langle x\mid y\rangle}$ for $x\in\cT_0$ and $y\in\cT_1$, which implies 
\[
d_{\cT_0,\cT_1}\subseteq D_{e,\cJ_{\frac12}}.
\]
All these conditions follow too from the fact that the bracket in $\cL$ is invariant under the action of
$\bigl(\frsl(V)\otimes e\bigr)\oplus\bigl(\frsl(V)\otimes(1-e)\bigr)$.
\end{remark}

\begin{remark}\label{re:BC2C2graded}
The Lie algebras graded by the nonreduced root system $BC_2$ and with grading subalgebra of type
$C_2$ are also coordinatized by $J$-ternary algebras with a proper idempotent plus some extra 
restrictions (\cite[Theorem 6.66]{AllisonBenkartGao02}). The connection with Theorem \ref{th:A1A1e} 
is as follows. Take two two-dimensional vector spaces $V_1$, $V_2$, endowed with nonzero alternating 
forms. Identify the simple Lie algebra of type $C_2$ with $\frs=\frsp(V_1\perp V_2)$. Then $\frsl(V_1)\oplus\frsl(V_2)\simeq \frsl_2\oplus\frsl_2$ is naturally a subalgebra of $\frs$, and $\frs$ decomposes,
as a module for $\frsl(V_1)\oplus\frsl(V_2)$, as $\frsl(V_1)\oplus\frsl(V_2)\oplus (V_1\otimes V_2)$.
The $\frs$-modules that appear in a $BC_2$-graded Lie algebra with grading subalgebra $\frs$
decompose, as modules for $\frsl(V_1)\oplus\frsl(V_2)$, into a direct sum of copies of the adjoint 
modules $\frsl(V_1)$ and $\frsl(V_2)$, natural modules $V_1$ and $V_2$, $V_1\otimes V_2$, and
the trivial module. Hence any $BC_2$-graded Lie algebra with a grading subalgebra of type $C_2$ is
naturally
endowed with a short $(\SLs_2\times\SLs_2)$-structure.
\end{remark}

\bigskip
%-----------------------------------

\section{Examples}\label{se:Examples}

In this section, several examples of $J$-ternary algebras will be {\rojo reviewed}. Any nontrivial idempotent in the corresponding Jordan algebras provides examples of short 
$(\SLs_2\times\SLs_2)$-structures on the associated Lie algebra. {\rojo The first class of examples of $J$-ternary algebras are defined in terms of  unital associative algebras with involution 
and  left modules for them, while the second class of examples is related to structurable algebras.}

The classification of the reduced finite-dimensional $J$-ternary algebras was obtained by W.~Hein 
\cite{Hein1,Hein2}.

\smallskip

\subsection{Prototypical example}\label{ss:proto}

Let $(\cA,*)$ be {\rojo a unital} associative algebra with involution, and let $\cT$ be a left $\cA$-module endowed with a skew-hermitian form $\hup:\cT\times\cT\rightarrow \cA$. That is, $\hup$ is $\FF$-bilinear, and   
$\hup(ax,y)=a\hup(x,y)$  and $\hup(x,y)={\rojo -\hup(y,x)^*}$, for any $a\in\cA$ and $x,y\in\cT$.

As above, let $V$ be a two-dimensional vector space endowed with a nonzero skew-symmetric
bilinear form $(.\mid .)$, and consider the left $\cA$-module 
{\rojo
\[
\cW=(V\otimes \cA)\oplus\cT,
\]}%
where $V\otimes \cA$ is a left $\cA$-module in the natural way: $a(u\otimes b)\bydef u\otimes (ab)$, for any $a,b\in \cA$ and $u\in V$. Extend $\hup$ to a skew-hermitian form, also denoted by $\hup$, on $\cW$ by imposing that 
$V\otimes\cA$ and $\cT$ are orthogonal relative to $\hup$, and defining its restriction to $V\otimes \cA$ as follows:
\begin{equation}\label{eq:huavb}
\hup(u\otimes a,v\otimes b)\bydef 2(u\mid v)ab^*,
\end{equation}
for any $u,v\in V$ and $a,b\in\cA$.

Denote by $\tau$ the involution  on $\End_\cA(\cW)$ induced by $\hup$:
\[
\hup\bigl(f(x),y\bigr)=\hup\bigl(x,f^\tau(y)\bigr)
\]
for any $f\in\End_\cA(\cW)$ and any $x,y\in\cW$, and denote too by $\tau$ the restriction of $\tau$ to $\End_\cA(\cT)$.

For any $x,y\in\cW$, the $\cA$-linear map defined as follows:
\begin{equation}\label{eq:phixy}
\varphi_{x,y}=\hup(.,x)y+\hup(.,y)x: z\mapsto \hup(z,x)y+\hup(z,y)x,
\end{equation}
is skew-symmetric relative to $\tau$. That is, $\varphi_{x,y}^\tau=-\varphi_{x,y}$, or 
$\varphi_{x,y}$ belongs to the Lie algebra $\Skew\bigl(\End_\cA(\cW),\tau\bigr)$. Also, for any $a\in\cA$ and $x,y\in\cW$, we have
\[
\varphi_{ax,y}=\varphi_{x,a^*y}
\]
because $\hup$ is skew-hermitian. And for any $f\in\Skew\bigl(\End_\cA(\cW),\tau\bigr)$ and
any $x,y\in\cW$, the following equation follows at once:
\begin{equation}\label{eq:fvphixy}
[f,\varphi_{x,y}]=\varphi_{f(x),y}+\varphi_{x,f(y)}.
\end{equation}

{\rojo For any endomorphism $\varphi$ of the left regular module $\cA$, there is an element $a\in \cA$
such that $\varphi(1)=a^*$. Hence, for any $b\in\cA$, we get $\varphi(b)=\varphi(b1)=b\varphi(1)=ba^*$.
As a consequence, the 
associative algebra $\End_\cA(\cA)$ is isomorphic to $\cA$ by means of the assignment:
\begin{equation}\label{eq:EndAAA}
a\mapsto \bigl(R_{a^*}:b\mapsto ba^*\bigr),
\end{equation}
and hence we can identify the associative algebras 
\[
\End_\cA(V\otimes\cA)\simeq
\End_\FF(V)\otimes\End_\cA(\cA)\simeq \End_\FF(V)\otimes \cA,
\] 
so that the element $f\otimes a\in\End_\FF(V)\otimes\cA$ is identified with the endomorphism
$f\otimes R_{a^*}\in\End_\cA(V\otimes\cA)$.

Also, for any $f\in \End_\FF(V)$, $u,v\in V$, and $a,b,c\in\cA$, we have:
\[
\hup(f(u)\otimes ba^*,v\otimes c)=2\bigl(f(u)\mid v\bigr)ba^*c^*
=2\bigl(u\mid f^\star(v)\bigr)b(ca)^*,
\]
where $f^\star$ is the adjoint of $f$ relative to $(.\mid.)$: 
$\bigl(f(u)\mid v\bigr)=\bigl(u\mid f^\star(v)\bigr)$ for any $u,v\in V$, so that $f^\star=-f$ if and only 
if $f\in\frsl(V)$. Hence, with the identification
above $\End_\cA(V\otimes\cA)\simeq
\End_\FF(V)\otimes \cA$, the adjoint $(f\otimes a)^\tau$ equals $f^\star\otimes a^*$, 
and}
 the Lie algebra $\Skew\bigl(\End_\cA(V\otimes\cA),\tau\bigr)$ appears as:
\[
\Skew\bigl(\End_\cA(V\otimes\cA),\tau\bigr)\simeq \bigl(\frsl(V)\otimes\cH(\cA,*)\bigr)\oplus 
\bigl(\id_V\otimes\Skew(\cA,*)\bigr),
\]
where $\cH(\cA,*)$ denotes the subspace of symmetric elements of $\cA$ relative to $*$.

Inside the Lie algebra $\Skew\bigl(\End_\cA(\cW),\tau\bigr)$ there is the subalgebra
\[
\cL=\Skew\bigl(\End_\cA(V\otimes\cA),\tau\bigr)\oplus \Skew\bigl(\End_\cA(\cT),\tau\bigr)
\oplus \varphi_{V\otimes\cA,\cT}
\]
which can be identified with
\[
\bigl(\frsl(V)\otimes\cH(\cA,*)\bigr)\oplus \varphi_{V\otimes\cA,\cT}\oplus
\Bigl(\bigl(\id_V\otimes\Skew(\cA,*)\bigr)\oplus \Skew\bigl(\End_\cA(\cT),\tau\bigr)\Bigr).
\]
Also, for any $u\in V$, $a\in\cA$, and $x\in\cT$, we have 
$\varphi_{u\otimes a,x}=\varphi_{u\otimes 1,a^*x}$, so that 
$\varphi_{V\otimes\cA,\cT}=\varphi_{V\otimes 1,\cT}$, and this allows us to identify 
$\varphi_{V\otimes\cA,\cT}$ with $V\otimes\cT$, where $u\otimes x$ corresponds to
$\varphi_{u\otimes 1,x}$ for $u\in V$ and $x\in\cT$. 

With these identifications we may write
\begin{equation}\label{eq:LieHAT}
\cL=\bigl(\frsl(V)\otimes\cH(\cA,*)\bigr)\oplus(V\otimes\cT)\oplus \cD,
\end{equation}
with $\cD=\bigl(\id_V\otimes\Skew(\cA,*)\bigr)\oplus \Skew\bigl(\End_\cA(\cT),\tau\bigr)$. This shows that 
$\cL$ is endowed with a short $\SLs_2$-structure, with $J$-ternary algebra 
$\bigl(\cJ=\cH(\cA,*),\cT\bigr)$.

Let us see what the operations in $(\cJ,\cT)$ look like, according to \eqref{eq:shortA1bracket} and \eqref{eq:shortA1maps}.
\begin{itemize}
\item For $f,g\in\frsl(V)$ and $a,b\in\cH(\cA,*)$, working inside $\End_\FF(V)\otimes\cA$ and 
using \eqref{eq:fggf}, we get:
{\rojo
\[
\begin{split}
[f\otimes a,g\otimes b]&=fg\otimes ab-gf\otimes ba\\
% &=[f,g]\otimes a\cdot b+f\cdot g\otimes [a,b]\\
 &=[f,g]\otimes a\cdot b+\frac{1}{2}\trace(fg) \id_V\otimes [a,b],
\end{split}
\]}
and this shows that the Jordan product on $\cJ=\cH(\cA,*)$ is the natural one: 
$a\cdot b=\frac12(ab+ba)$, while $D_{a,b}$ is given by 
$D_{a,b}=\id_V\otimes 4[a,b]\in\id_V\otimes \Skew(\cA,*)$.

\item For $f\in\frsl(V)$, $a\in\cH(\cA,*)$, $v\in V$, and $x\in \cT$, we get:
\[
[f\otimes a,\varphi_{v\otimes 1,x}]=\varphi_{f(v)\otimes a^*,x}=
\varphi_{f(v)\otimes 1,ax},
\]
so the action of $\cJ$ on $\cT$ is also the natural one: $a\bullet x=ax$.

{\rojo
Also, for any $s\in\Skew(\cA,*)$, $a\in\cH(\cA,*)$ and $x\in\cT$, we have
\begin{equation}\label{eq:idVs}
[\id_V\otimes s,\varphi_{v\otimes 1,x}]=\varphi_{v\otimes s^*,x}=\varphi_{v\otimes 1,sx}.
\end{equation}}

\item Finally, for $u,v\in V$ and $x,y\in\cT$, we get
\begin{equation}\label{eq:uxvy}
\begin{split}
[\varphi_{u\otimes 1,x},\varphi_{v\otimes 1,y}]
  &=\varphi_{\varphi_{u\otimes 1,x}(v\otimes 1),y}
          +\varphi_{v\otimes 1,\varphi_{u\otimes 1,x}(y)}\\
  &={\rojo 2(v\mid u)}\varphi_{x,y}+\varphi_{v\otimes 1,u\otimes \hup(y,x)}.
\end{split}
\end{equation}
But for $u,v,w\in V$ and $a\in\cH(\cA,*)$, the map $\varphi_{v\otimes 1,u\otimes a}$ sends
$w\otimes 1$ to
{\rojo
\[
2(w\mid v)u\otimes a+2(w\mid u)v\otimes a^*.
\]
On the other hand, for $a=c+s$ with $c=c^*\in\cH(\cA,*)$ and $s=-s^*\in\Skew(\cA,*)$, using that 
$(u\mid v)w+(v\mid w)u+(w\mid u)v$ is $0$, we get
\[
\begin{split}
2(w\mid v)u&\otimes a+ 2(w\mid u)v\otimes a^*\\
 &=2(w\mid v)u\otimes (c+s)+2(w\mid u)v\otimes (c-s)\\
 &=-2\gamma_{u,v}(w)\otimes c  -2\bigl((v\mid w)u+(w\mid u)v\bigr)\otimes s\\
 &=\gamma_{u,v}(w)\otimes(-a-a^*)+(u\mid v)w\otimes (a-a^*)\\
 &=\Bigl(\gamma_{u,v}\otimes(-a-a^*)-(u\mid v)\id_V\otimes(a-a^*)\Bigr)(w\otimes 1)
 \end{split}
 \]
 (recall that the element $f\otimes a\in\End_\FF(V)\otimes\cA$ is identified with the endomorphism
 $f\otimes R_{a^*}\in \End_\cA(V\otimes\cA)$),
so that we have
\[
\varphi_{v\otimes 1,u\otimes a}=\gamma_{u,v}\otimes (-a-a^*)-(u\mid v)\id_V\otimes (a-a^*),
\]
and then \eqref{eq:uxvy} gives the following identity:
\begin{multline*}
[\varphi_{u\otimes 1,x},\varphi_{v\otimes 1,y}]\\
 =\gamma_{u,v}\otimes \bigl(\hup(x,y)-\hup(y,x)\bigr)
   +(u\mid v)\Bigl(-2\varphi_{x,y}-\bigl(\id_V\otimes \bigl(\hup(x,y)+\hup(y,x)\bigr)\bigr)\Bigr),
\end{multline*}
and \eqref{eq:shortA1bracket} gives the following values:
\begin{equation}\label{eq:xydxy}
\begin{split}
&\langle x\mid y\rangle =\hup(x,y)-\hup(y,x)\,\bigl(=\hup(x,y)+\hup(x,y)^*\in\cH(\cA,*)\bigr),\\
&d_{x,y}=-2\varphi_{x,y}-\id_V\otimes\bigl(\hup(x,y)+\hup(y,x)\bigr)\in 
    \Skew\bigl(\End_\cA(\cT),\tau\bigr)\oplus \bigl(\id_V\otimes \Skew(\cA,*)\bigr).
\end{split}
\end{equation}
Now Equations \eqref{eq:Jternary}, \eqref{eq:phixy}, and \eqref{eq:xydxy}, provide the triple product:
\begin{equation}\label{eq:xyz_proto}
\begin{split}
\ptriple{x,y,z}
     &=\varphi_{x,y}(z)+\frac12 \bigl(\hup(x,y)+\hup(y,x)\bigr)z
         +\frac12\bigl(\hup(x,y)-\hup(y,x)\bigr)z\\
    &=\hup(z,x)y+\hup(z,y)x+\frac12\bigl(\hup(x,y)+\hup(y,x)\bigr)z+\frac12\bigl(\hup(x,y)-\hup(y,x)\bigr)z\\
    &=\hup(x,y)z+\hup(z,x)y+\hup(z,y)x.
\end{split}
\end{equation}}
\end{itemize}

Therefore, our $J$-ternary algebra $(\cJ,\cT)$ is the \emph{prototypical example} of $J$-ternary
algebra in \cite[Example 3.14]{AllisonBenkartGao02}. {\rojo We summarize it in the following result:

\begin{proposition} 
Let $(\cA,*)$ be a unital associative algebra with involution and let   $\cT$ be a left $\cA$-module endowed with a skew-hermitian form 
$\hup:\cT\times\cT\rightarrow \cA$. Then the pair $(\cJ,\cT)$, where $\cJ=\cH(\cA,*)$ is the Jordan
algebra of symmetric elements of $\cA$ relative to the involution $*$, is a $J$-ternary algebra with 
the following operations: 
\begin{itemize}
\item $a\cdot b=\frac{1}{2}(ab+ba)$ for any $a,b\in\cJ=\cH(\cA,*)$,
\item $a\bullet x=ax$ for any $a\in\cJ$ and $x\in\cT$,
\item $\langle x\mid y\rangle = \hup(x,y)-\hup(y,x)$ for $x,y\in\cT$, and
\item $\ptriple{x,y,z}=\hup(x,y)z+\hup(z,x)y+\hup(z,y)x$ for $x,y,z\in\cT$.
\end{itemize}
\end{proposition}}

\bigskip

Let us have a look at some particular cases of this prototypical example. 

To begin with, let $W$ and 
$Z$ be two vector spaces, let $\cB$ be the associative algebra $\End_\FF(W)$ of linear 
endomorphisms of $W$, and let $\cA=\cB\oplus\cB^{op}$ be the direct sum of $\cB$ and its 
opposite algebra (defined over the same vector space but with new multiplication $a.b=ba$). 
Consider the exchange involution on $\cA$: $(a,b)^{ex}=(b,a)$. The Jordan algebra 
$\cJ=\cH(\cA,ex)$ is isomorphic to the Jordan algebra $\cB^{(+)}$ obtained by the symmetrization of
the multiplication in $\cB$.

The vector space $W$ 
is the natural left 
module for $\cB$, and hence it is a left module for $\cA$ annihilated by $\cB^{op}$. The dual vector space $W^*$ is a right $\cB$-module, and hence a left $\cB^{op}$-module, so it becomes a 
left $\cA$-module annihilated by $\cB$. Then $\cT\bydef (W\otimes Z^*)\oplus (W^*\otimes Z)$ 
becomes a left $\cA$-module in a natural way {\rojo (the action of $\cA$ on $Z$ and $Z^*$ is
trivial)}, and it is endowed with a skew-hermitian form
$\hup$ in which the subspaces $W\otimes Z^*$ and $W^*\otimes Z$ are totally isotropic, and
where we have
{\rojo
\[
\hup(w\otimes \alpha, \omega\otimes z)=\alpha(z)w\omega(.)\in\End_\FF(W)=\cB\subseteq\cA
\]
for $w\in W$, $\omega\in W^*$, $z\in Z$, and $\alpha\in Z^*$.}

Then $\bigl(\cJ=\cB^{(+)},\cT\bigr)$ is a $J$-ternary algebra and if $W$ and $Z$ are finite-dimensional, the Lie algebra $\cL$ in \eqref{eq:LieHAT} is naturally isomorphic to the
general linear Lie algebra $\frgl\bigl((V\otimes W)\oplus Z\bigr)$. Its derived algebra
$\frsl\bigl((V\otimes W)\oplus Z\bigr)$ shares the same $J$-ternary algebra.

\smallskip

On the other hand, if $W$ is a vector space endowed with a symmetric 
(respectively skew-symmetric) bilinear form $\bup_W:W\times W\rightarrow \FF$, and 
$Z$ is another vector space endowed with a skew-symmetric (resp. symmetric) bilinear form 
$\bup_Z:Z\times Z\rightarrow \FF$, then $\cT=W\otimes Z$ is naturally a left module for the algebra 
$\cA=\End_\FF(W)$, which is endowed with the involution attached to $\bup_W$, while $\cT$ is 
endowed with the skew-hermitian form
\[
\begin{split}
\hup: \cT\times\cT&\longrightarrow \cA\\
  (w_1\otimes z_1,w_2\otimes z_2)&\mapsto \bup_Z(z_1,z_2)w_1\bup_W(w_2,.).
\end{split}
\]
For finite-dimensional $W$ and $Z$, the Lie algebra
$\cL$ in \eqref{eq:LieHAT} is the symplectic Lie algebra $\frsp\bigl((V\otimes W)\perp Z\bigr)$ (resp. the orthogonal Lie algebra $\frso\bigl((V\otimes W)\perp Z\bigr)$), where the bilinear form on 
$(V\otimes W)\perp Z$ is the orthogonal sum of the form $\bup_Z$ on $Z$ and the tensor product of the skew-symmetric form on $V$ and the form $\bup_W$ on $W$.

\medskip

\subsection{Structurable algebras and \texorpdfstring{$J$}{J}-ternary algebras}\label{ss:Structurable}

Let $\cL$ be a Lie algebra with a short $\SLs_2$-structure and isotypic decomposition as in 
\eqref{eq:shortA1_isotypic}. The Lie bracket is then given by \eqref{eq:shortA1bracket}, and the 
triple product on $\cT$ by \eqref{eq:Jternary}. 

Fix a symplectic basis $\{p,q\}$ in $V$: $(p\mid q)=1$, and identify $\frsl(V)$ with $\frsl_2$ using 
this basis. Then we have
$E=\left(\begin{smallmatrix} 0&1\\ 0&0\end{smallmatrix}\right)=\frac12\gamma_{p,p}$,
$H=\left(\begin{smallmatrix} 1&0\\ 0&-1\end{smallmatrix}\right)=-{\rojo \gamma_{p,q}}$, and
$F=\left(\begin{smallmatrix} 0&0\\ 1&0\end{smallmatrix}\right)=-\frac12\gamma_{q,q}$. 
The adjoint action of $H$ gives a $5$-grading:
\[
\cL=\cL_{-2}\oplus\cL_{-1}\oplus\cL_0\oplus\cL_1\oplus\cL_2,
\]
where $\cL_2=E\otimes \cJ$, $\cL_{-2}=F\otimes\cJ$, $\cL_1=p\otimes\cT$, 
$\cL_{-1}=q\otimes \cT$, and $\cL_0=(H\otimes \cJ)\oplus\cD$.

Identify $\cJ$ with $\cL_2$ by means of $a\leftrightarrow E\otimes a$, and $\cT$ with $\cL_1$ by
means of $x\leftrightarrow p\otimes x$. Writing $F$ for $F\otimes 1$ and using \eqref{eq:shortA1bracket} gives, for $a,b\in \cJ$ 
and $x,y,z\in\cT$, the following equations:
\begin{itemize}
\item 
$[[E\otimes a,F],E\otimes b]=[H\otimes a,E\otimes b]=2E\otimes a\cdot b$, so that the Jordan product in $\cJ$ becomes the product in $\cL_2$  given by
\begin{equation}\label{eq:AB}
A\cdot B=\frac12 [[A,F],B]
\end{equation}
for $A,B\in\cL_2$.

\item
$[[E\otimes a,F],p\otimes x]=[H\otimes a,p\otimes x]=p\otimes a\bullet x$, and hence $\cL_1$ becomes a special module for the Jordan algebra $\cL_2$ with the action given by
\begin{equation}\label{eq:AX}
A\bullet X=[[A,F],X],
\end{equation}
for $A\in\cL_2$ and $X\in\cL_1$.

\item
$[p\otimes x,p\otimes y]=\gamma_{p,p}\otimes\langle x\mid y\rangle
=2E\otimes\langle x\mid y\rangle$, so that the product $\cL_1\times\cL_1\rightarrow\cL_2$ in the
$J$-ternary algebra $(\cL_2,\cL_1)$ is given by
\begin{equation}\label{eq:XY}
\langle X\mid Y\rangle=\frac12 [X,Y],
\end{equation}
for $X,Y\in\cL_1$.

\item
$[[p\otimes x,[{\rojo p}\otimes y,F]],p\otimes z]=-[[p\otimes x,q\otimes y],p\otimes z]
=-[\gamma_{p,q}\otimes\langle x\mid y\rangle+d_{x,y},p\otimes z]=
p\otimes\bigl(\langle x\mid y\rangle\bullet z-d_{x,y}(z)\bigr)=2p\otimes\ptriple{x,y,z}$ (recall
\eqref{eq:Jternary}), and hence
the triple product in $\cL_1$ is given by
\begin{equation}\label{eq:XYZ}
\ptriple{X,Y,Z}=\frac12 [[X,[Y,F]],Z],
\end{equation}
for $X,Y,Z\in\cL_1$.
\end{itemize}

This is summarized in the next result.

\begin{proposition}\label{pr:5graded}
Let $\cL$ be a $5$-graded Lie algebra, and assume that there are elements $E\in\cL_2$ and 
$F\in\cL_{-2}$, such that $\espan{E,F,H=[E,F]}$ is a subalgebra isomorphic to $\frsl_2$, with 
$\cL_i=\{X\in\cL\mid [H,X]=iX\}$, for $i=-2,-1,0,1,2$. {\rojo (In particular $[H,E]=2E$ and 
$[H,F]=-2F$.)} Then the pair $(\cL_2,\cL_1)$ is a 
$J$-ternary algebra with the operations in \eqref{eq:AB}, \eqref{eq:AX}, \eqref{eq:XY}, and
\eqref{eq:XYZ}.
\end{proposition}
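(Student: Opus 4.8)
The plan is to recognize the hypotheses as (the infinitesimal form of) an inner \emph{short} $\SLs_2$-structure on $\cL$ and then invoke Theorem \ref{th:SL2Jternary} together with the computations already carried out above. Concretely: since $\cL_i=0$ for $|i|>2$ while $\ad E$ (resp.\ $\ad F$) shifts the $\ad H$-eigenvalue by $+2$ (resp.\ $-2$), one has $(\ad E)^3=(\ad F)^3=0$; thus $\ad\colon\frsl_2=\espan{E,F,H}\to\Der(\cL)$ is a Lie algebra homomorphism with $\ad E,\ad F$ nilpotent, and — as $\charac\FF\neq 2,3$ — it exponentiates to a homomorphism of affine group schemes $\Phi\colon\SLs_2\to\AAut(\cL)$ (the two root subgroups going to $s\mapsto\exp(s\,\ad E)$ and $s\mapsto\exp(s\,\ad F)$, the maximal torus to the grading automorphisms $\cL_i\ni z\mapsto\zeta^i z$). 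Taking $\iota\colon\frsl_2\hookrightarrow\cL$ to be the inclusion of $\espan{E,F,H}$ makes \eqref{eq:iota} commute, so $\Phi$ is inner; and it is short (Definition \ref{df:shortA1}) because the $\ad H$-eigenvalues on $\cL$ lie in $\{-2,-1,0,1,2\}$, so $\cL$ — being the sum of the finite-dimensional $\frsl_2$-submodules generated by its homogeneous elements — decomposes under $\frsl_2$ into copies of the trivial, natural, and adjoint modules only.

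Theorem \ref{th:SL2Jternary} now gives an isotypic decomposition $\cL=(\frsl(V)\otimes\cJ)\oplus(V\otimes\cT)\oplus\cD$ as in \eqref{eq:shortA1_isotypic} and makes $(\cJ,\cT)$ a $J$-ternary algebra with triple product \eqref{eq:Jternary}. Fixing a symplectic basis $\{p,q\}$ of $V$ with $(p\mid q)=1$ and identifying $\SLs(V)\simeq\SLs_2$ so that $E,H,F$ become the standard triple (with $E=\frac12\gamma_{p,p}$, $F=-\frac12\gamma_{q,q}$ in the notation of Lemma \ref{le:sl2}), the $\ad H$-eigenspace decomposition of $\cL$ reads $\cL_2=E\otimes\cJ$, $\cL_1=p\otimes\cT$, $\cL_0=(H\otimes\cJ)\oplus\cD$, $\cL_{-1}=q\otimes\cT$, $\cL_{-2}=F\otimes\cJ$, which by hypothesis is the given $5$-grading. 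Identifying $\cJ$ with $\cL_2$ via $a\leftrightarrow E\otimes a$ and $\cT$ with $\cL_1$ via $x\leftrightarrow p\otimes x$, the computations immediately preceding the statement — which use only \eqref{eq:shortA1bracket}, $E=\frac12\gamma_{p,p}$, $H=[E,F]$, the convention of writing $F$ for $F\otimes 1$, and \eqref{eq:fggf} — show that the Jordan product of $\cJ$, the action $\cJ\bullet\cT$, the skew map $\langle\,\cdot\mid\cdot\,\rangle\colon\cT\times\cT\to\cJ$, and the triple product become \eqref{eq:AB}, \eqref{eq:AX}, \eqref{eq:XY}, and \eqref{eq:XYZ}, respectively. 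Since these four formulas are expressed solely in terms of the Lie bracket and the elements $E,F,H$, they are independent of all the choices above, which finishes the argument.

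The one step needing care is the production of the inner short $\SLs_2$-structure from the bare Lie-theoretic data (nilpotency of $\ad E,\ad F$, integrability to $\SLs_2$, and boundedness of the grading forcing shortness); everything afterwards is bookkeeping with the symplectic basis plus the computations already displayed. A more self-contained — but substantially longer — route would be to take \eqref{eq:AB}--\eqref{eq:XYZ} as the definitions of the four operations and check axioms (JT1)--(JT6) directly from the Jacobi identity, using the $5$-grading and the $\frsl_2$-triple relations; the reduction above avoids those verifications.
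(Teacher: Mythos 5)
Your proposal is correct and follows essentially the same route as the paper: the paper obtains Proposition \ref{pr:5graded} precisely by the computations displayed just before it (transferring the operations of the coordinatizing $J$-ternary algebra of Theorem \ref{th:SL2Jternary} to $\cL_2$ and $\cL_1$ via the identifications $a\leftrightarrow E\otimes a$, $x\leftrightarrow p\otimes x$), and your argument does the same. The only difference is that you make explicit the step the paper leaves implicit — that the $\frsl_2$-triple with bounded $\ad H$-eigenvalues integrates to an inner short $\SLs_2$-structure (complete reducibility into trivial, natural and adjoint summands, which over fields of characteristic $p>3$ deserves the brief care you indicate) — which is a welcome addition rather than a divergence.
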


\medskip

In particular, consider a structurable algebra $(\cA,\text{-})$, that is, a unital algebra with involution
such that the operators $V_{x,y}$ defined by 
\[
V_{x,y}(z)=\{x,y,z\}\bydef (x\bar y)z+(z\bar y)x-(z\bar x)y
\]
satisfy
\[
[V_{x,y},V_{z,w}]=V_{V_{x,y}(z),w}-V_{z,V_{y,x}(w)}
\]
for all $x,y,z,w\in\cA$ (see \cite{Allison78}). Let $\cH$ and $\cS$ denote, respectively, the subspace of symmetric ($\bar h=h$) and skew-symmetric ($\bar s=-s$) elements in $\cA$. The 
inner structure Lie algebra $\Instrl(\cA,\text{-})$ of $\cA$ is the Lie subalgebra of $\frgl(\cA)$ spanned
by the $V_{x,y}$'s.

The Kantor construction (see \cite{Allison79} or \cite[6.4]{AllisonBenkartGao02}) gives the
$5$-graded Lie algebra
\[
\cL=K(\cA,\text{-})=\cS^\sim\oplus\cA^\sim\oplus\Instrl(\cA,\text{-})\oplus\cA\oplus\cS,
\]
where $\cL_{-2}=\cS^\sim$ is a copy of $\cS$, $\cL_{-1}=\cA^\sim$ is a copy of $\cA$ (the elements in these cases will be written as $s^\sim$ for $s\in\cS$, or $x^\sim$ for $x\in\cA$), 
$\cL_0=\Instrl(\cA,\text{-})$, $\cL_1=\cA$ and $\cL_2=\cS$. The Lie bracket in $\cL$ is given by the following equations:
\[
\begin{aligned}
\null [T,x]&=T(x),      &       [T,x^\sim]&=T^\epsilon(x)^\sim,\\
[T,s]&=T(s)+s\overline{T(1)},\qquad\null   &   
   [T,s^\sim]&=\bigl(T^\epsilon(s)+s\overline{T^\epsilon(1)}\bigr)^\sim,\\
[x,y]&=2(x\bar y-y\bar x),  &  [x^\sim,y^\sim]&=2(x\bar y-y\bar x)^\sim,\\
[x,y^\sim]&=2V_{x,y}, &  [s,t^\sim]&=L_sL_t\Bigl(=\frac12 (V_{st,1}-V_{s,t})\Bigr),\\
[x,s^\sim]&=-(sx)^\sim, &  [x^\sim,s]&=-sx,
\end{aligned}
\]
for any $x,y\in\cA$, $s,t\in\cS$, and $T\in\Instrl(\cA,\text{-})$, and where $L_s$ is the operator of left multiplication by $s$, and $V_{x,y}^\epsilon\bydef -V_{y,x}$ for any $x,y\in\cA$.

\begin{proposition}\label{pr:structurable}
Let $(\cA,\text{-})$ be a structurable algebra, and let $s\in\cS$ be a skew-symmetric element such
that the left multiplication $L_s$ is bijective, then the pair $(\cS,\cA)$ is a $J$-ternary algebra with 
the following operations:
\[
\begin{split}
a\cdot b&=\frac12\bigl(a(sb)+b(sa)\bigr),\\
a\bullet x&=a(sx),\\
\langle x\mid y\rangle&=x\bar y-y\bar x,\\
\ptriple{x,y,z}&=-V_{x,sy}(z),
\end{split}
\]
for $a,b\in\cS$ and $x,y,z\in \cA$.
\end{proposition}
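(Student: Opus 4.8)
The plan is to derive the statement from Proposition~\ref{pr:5graded}, applied to the Kantor construction $\cL=K(\cA,\text{-})$ recalled above: this is the $5$-graded Lie algebra with $\cL_2=\cS$, $\cL_1=\cA$, $\cL_0=\Instrl(\cA,\text{-})$, $\cL_{-1}=\cA^\sim$ and $\cL_{-2}=\cS^\sim$. The first step is to identify the grading derivation. Since $V_{1,1}=\id_\cA$, the bracket table of $K(\cA,\text{-})$ gives at once that $H\bydef V_{1,1}\in\Instrl(\cA,\text{-})=\cL_0$ acts on $\cL_i$ as multiplication by $i$: for instance $[H,x]=V_{1,1}(x)=x$ on $\cA=\cL_1$ and $[H,a]=V_{1,1}(a)+a\,\overline{V_{1,1}(1)}=2a$ on $\cS=\cL_2$, with eigenvalues $-1$ on $\cL_{-1}$ and $-2$ on $\cL_{-2}$, while $[H,T]=0$ on $\cL_0$ since $H$ is central in $\frgl(\cA)$. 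Thus $\cL_i=\{X\in\cL\mid[H,X]=iX\}$, which is the grading hypothesis of Proposition~\ref{pr:5graded}.

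The crux is then to exhibit an $\frsl_2$-triple compatible with this grading. I would take $F\bydef s^\sim\in\cL_{-2}$ and $E\bydef e\in\cS=\cL_2$, where $e$ is a skew element with $L_eL_s=\id_\cA$. The existence of such an $e$ is exactly what the hypothesis ``$L_s$ bijective'' provides: bijectivity of $L_s$ is precisely the statement that $s$ is an invertible skew element of the structurable algebra $(\cA,\text{-})$, with inverse $e=s^{-1}\in\cS$ satisfying $L_{s^{-1}}=L_s^{-1}$, a fact from the structure theory of structurable algebras. Granting it, $[E,F]=[e,s^\sim]=L_eL_s=\id_\cA=H$, and since $[H,E]=2E$ and $[H,F]=-2F$ by the previous paragraph, $\espan{E,F,H}$ is a subalgebra of $\cL$ isomorphic to $\frsl_2$ inducing the given grading. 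Proposition~\ref{pr:5graded} then shows that $(\cL_2,\cL_1)=(\cS,\cA)$ is a $J$-ternary algebra, with the operations \eqref{eq:AB}, \eqref{eq:AX}, \eqref{eq:XY} and \eqref{eq:XYZ} evaluated through $F=s^\sim$.

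Finally one unwinds those four operations with the bracket table of $K(\cA,\text{-})$, which is routine. For $a,b\in\cS$, $[a,s^\sim]=L_aL_s$ and $[[a,s^\sim],b]=(L_aL_s)(b)+b\,\overline{(L_aL_s)(1)}=a(sb)+b(sa)$, so $a\cdot b=\frac12\bigl(a(sb)+b(sa)\bigr)$; likewise $a\bullet x=[[a,s^\sim],x]=(L_aL_s)(x)=a(sx)$. For $x,y\in\cA$, $[x,y]=2(x\bar y-y\bar x)$ gives $\langle x\mid y\rangle=x\bar y-y\bar x$. And $[y,s^\sim]=-(sy)^\sim$ together with $[x,(sy)^\sim]=2V_{x,sy}$ give $[[x,[y,s^\sim]],z]=-2V_{x,sy}(z)$, whence $\ptriple{x,y,z}=-V_{x,sy}(z)$. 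These are the asserted formulas. (Alternatively one can verify axioms \textbf{(JT1)}--\textbf{(JT6)} directly from the structurable identities, but the route through $K(\cA,\text{-})$ is much shorter; the only non-formal ingredient is the invertibility fact of the second paragraph, which is the main point.)
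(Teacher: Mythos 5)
Your proposal is correct and follows essentially the same route as the paper: apply Proposition \ref{pr:5graded} to $K(\cA,\text{-})$ with $F=s^\sim$, $E$ a skew element with $L_EL_s=\id$ (the paper cites \cite[Proposition 11.1]{AllisonHein} for its existence), $H=V_{1,1}=\id$, and then unwind \eqref{eq:AB}--\eqref{eq:XYZ} via the bracket table. Your extra verification that $H$ acts on $\cL_i$ with eigenvalue $i$ is a detail the paper leaves implicit, and the only thing missing is the explicit reference for the invertibility fact you correctly flag as the non-formal ingredient.
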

\begin{proof}
In a slightly different way, this appears without proof in \cite[Remark 6.7]{AllisonBenkartGao02}. 
We will prove it here as a consequence of Proposition \ref{pr:5graded}. 

Since $L_s$ is invertible, there is an element $s'\in\cS$ such that $L_s^{-1}=L_{s'}$ 
(\cite[Proposition 11.1]{AllisonHein}) and then the elements $E=s'$ and $F=s^\sim$ satisfy the
conditions of Proposition \ref{pr:5graded}, with $H=[E,F]=V_{1,1}=\id$. Therefore, $(\cS,\cA)$
is a $J$-ternary algebra with the operations defined as in \eqref{eq:AB}, \eqref{eq:AX}, \eqref{eq:XY}, and \eqref{eq:XYZ}:
\[
\begin{split}
a\cdot b&=\frac12 [[a,s^\sim],b]=\frac12[L_aL_s,b]
   =\frac12 \bigl(a(sb)+b(\overline{as})\bigr)=\frac 12\bigl(a(sb)+b(sa)\bigr),\\
a\bullet x&=[[a,s^\sim],x]=[L_aL_s,x]=a(sx),\\
\langle x\mid y\rangle&=\frac12 [x,y]=x\bar y-y\bar x,\\
\ptriple{x,y,z}&=\frac12 [[x,[y,s^\sim]],z]=\frac12 [[x,-(sy)^\sim],z]
     =-[V_{x,sy},z]=-V_{x,sy}(z),
\end{split}
\]
for $a,b\in\cS$ and $x,y,z\in\cA$, as required.
\end{proof}

As a particular case, consider the case of the {\rojo structurable algebra obtained as the} tensor product of a Cayley (or octonion) algebra and another unital composition algebra: $\cA=\cC_1\otimes \cC_2$, where the involution is the tensor product of the
canonical involutions in the Cayley algebra $\cC_1$ and in the composition algebra $\cC_2$. 

Denote by $\nup_i$ the norm in $\cC_i$, and by $\cS_i$ the subspace of trace zero elements (i.e., 
$\bar s=-s$) in $\cC_i$. The skew-symmetric part of $\cA$ is 
$\cS=\cS_1\otimes 1 + 1\otimes\cS_2$, which we will identify with $\cS_1\oplus\cS_2$. Fix an element $s\in\cS_1$ with $\nup_1(s)\neq 0$.

As in \cite{Allison88} consider the \emph{Albert form} $Q:\cS\rightarrow\FF$, which is the 
nondegenerate quadratic form given by 
\[
Q(s_1+s_2)\bydef \frac{1}{\nup_1(s)}\bigl(\nup_1(s_1)-\nup_2(s_2)\bigr),
\]
for $s_1\in\cS_1$ and $s_2\in\cS_2$. Consider also the linear map $\sharp$ given by
\[
(s_1+s_2)^\sharp \bydef \nup_1(s)(s_1-s_2).
\]
Write $c=-\frac{1}{\nup_1(s)}s$. The pair $(\cS,\cA)$ is a $J$-ternary algebra 
(Proposition \ref{pr:structurable}), and the Jordan product in $\cS$ satisfies, for any $a\in\cS$,
 the following:
\begin{equation}\label{eq:Scdot}
a^{\cdot 2}=asa=\frac{1}{\nup_1(s)}(as^\sharp a)=
\frac{1}{\nup_1(s)}\bigl(Q(a)s-Q(a,s)a\bigr)=Q(a,c)c-Q(a)c,
\end{equation}
where we have used \cite[(3.7)]{Allison88}. Therefore $\cS$ is the Jordan algebra denoted
by $\cJord(Q,c)$ in \cite[I.3.7]{McCrimmon} (a \emph{quadratic factor} in the notation there), or
the \emph{Jordan algebra of the quadratic form} $-Q\vert_{(\FF s)^\perp}$ in the notation
of \cite[I.4]{JacobsonJordan}. The unity is the element $c$. (Here $(\FF s)^\perp$ denotes the
subspace of $\cS$ orthogonal to $s$.)

Our structurable algebra $\cA=\cC_1\otimes\cC_2$ is a special module for the Jordan algebra
$(\cS,\cdot)$ with $a\bullet x=a(sx)$. For any $a\in\cS$ orthogonal to $s$ (relative to $Q$), and 
any $x\in \cA$ we get
\[
a\bullet(a\bullet x)=a^{\cdot 2}\bullet x=-Q(a)c\bullet x=-Q(a)x,
\]
and hence the action of $\cS$ on $\cA$ extends to a homomorphism of associative algebras.
\begin{equation}\label{eq:theta}
\theta:\Cl\bigl((\FF s)^\perp,-Q\vert_{(\FF s)^\perp}\bigr)\rightarrow \End_\FF(\cA),
\end{equation}
where $\Cl\bigl((\FF s)^\perp,-Q\vert_{(\FF s)^\perp}\bigr)$ is the associated Clifford algebra. 
This is the unital special universal envelope of the Jordan algebra $\cS$ 
\cite[II.11]{JacobsonJordan}. Actually, the Jordan algebra $(\cS,\cdot)$ embeds in
$\Cl\bigl((\FF s)^\perp,-Q\vert_{(\FF s)^\perp}\bigr)^{(+)}$, with $\alpha c+u\in\cS$ being sent to
$\alpha 1+u\in \Cl\bigl((\FF s)^\perp,-Q\vert_{(\FF s)^\perp}\bigr)$, for any $\alpha\in \FF$ and 
$u\in(\FF s)^\perp$ (recall that $c$ is the unity of the Jordan algebra $(\cS,\cdot)$).

\begin{remark}\label{re:Clifford}
The Clifford algebra $\Cl\bigl((\FF s)^\perp,-Q\vert_{(\FF s)^\perp}\bigr)$ is isomorphic to the even Clifford algebra $\Cl^+\bigl(\cS,Q\bigr)$. The natural isomorphism takes any $a\in(\FF s)^\perp$
to the element $\frac{1}{\nup_1(s)}a\diamond s$, where $\diamond$ denotes the (associative) 
product in $\Cl\bigl(\cS,Q\bigr)$.
\end{remark}

\medskip

If $\cC_2$ is associative, and hence of dimension at most $4$, then $\cA$ is a free right 
$\cC_2$-module of dimension $8$ and the image of $\theta$ in \eqref{eq:theta} is contained in 
$\End_{\cC_2}(\cA)\simeq\End_\FF(\cC_1)\otimes\cC_2$. Actually, \cite[Theorem 4.5]{Allison88}
shows that the image of $\theta$ is precisely $\End_{\cC_2}(\cA)$. Moreover, if $\cC_2$ is commutative (so its dimension is $1$ or $2$), by dimension count, $\theta$ gives an isomorphism
$\Cl\bigl((\FF s)^\perp,-Q\vert_{(\FF s)^\perp}\bigr)\simeq \End_{\cC_2}(\cA)$. If $\cC_2$ is a quaternion algebra, then $\Cl\bigl((\FF s)^\perp,-Q\vert_{(\FF s)^\perp}\bigr)$ has dimension
$2^9$, while $\End_{\cC_2}(\cA)\simeq \Mat_8(\cC_2)$ has dimension $4\times 8^2=2^8$. In this case $\Cl\bigl((\FF s)^\perp,-Q\vert_{(\FF s)^\perp}\bigr)$ is isomorphic to the tensor product of
the even Clifford algebra, which is simple, and the two-dimensional center. The existence of $\theta$
shows that $\Cl\bigl((\FF s)^\perp,-Q\vert_{(\FF s)^\perp}\bigr)$ is not simple, and hence its center
is isomorphic to $\FF\times\FF$. It follows that $\Cl\bigl((\FF s)^\perp,-Q\vert_{(\FF s)^\perp}\bigr)$
is isomorphic to $\Mat_8(\cC_2)\times\Mat_8(\cC_2)$.

If $\cC_2$ is not associative, then it is an eight-dimensional Cayley algebra, and the dimension
of $\Cl\bigl((\FF s)^\perp,-Q\vert_{(\FF s)^\perp}\bigr)$ is $2^{13}$, while the dimension of
$\End_\FF(\cA)$ is $64^2=2^{12}$. As in the previous case, we conclude that
$\Cl\bigl((\FF s)^\perp,-Q\vert_{(\FF s)^\perp}\bigr)$ is isomorphic to 
$\Mat_{64}(\FF)\times\Mat_{64}(\FF)$.

\smallskip
{\rojo
\begin{remark} In a different setting, the $J$-ternary algebras obtained from the structurable algebras
$\cC_1\otimes\cC_2$ have been considered  too in \cite{BdM}.
\end{remark}}

\bigskip

\section{Short \texorpdfstring{$(\SLs_2\times\SLs_2)$}{SL2xSL2}-structures on 
finite-dimensional simple Lie algebras}\label{se:A1A1simple}

The aim of this section is to take advantage of the description of the short $\SLs_2$-structures on the 
finite-dimensional simple Lie algebras in \cite{Stasenko}, obtained by a careful analysis of the
$5$-gradings on these algebras, together with Theorem \ref{th:A1A1e}, to describe the
short $(\SLs_2\times\SLs_2)$-structures on these algebras. {\rojo All these structures are obtained
from the examples considered in the previous section.}

Throughout the section, the ground field $\FF$ will be assumed to be algebraically closed and of
characteristic $0$.

\medskip

\subsection{Special linear Lie algebras}\label{ss:special}

The results in \cite[\S 4.2]{Stasenko} show that for a finite-dimensional simple Lie algebra of type $A$
(special linear), the only $\SLs_2$-structures are given by viewing it as 
$\frsl\bigl((V\otimes W)\oplus Z\bigr)$, where $V$ is our two-dimensional vector space endowed with
a nonzero alternating bilinear form $(.\mid.)$, and $W$ and $Z$ are two other vector spaces.
In this case, the associated $J$-ternary algebra $(\cJ,\cT)$ is given in Subsection \ref{ss:proto} 
(prototypical example), with $\cJ=\End_\FF(W)^{(+)}$ and $\cT=(W\otimes Z^*)\oplus(W^*\otimes Z)$.

The proper idempotents $e\neq 0,1$ of $\End_\FF(W)^{(+)}$ exist only if $\dim_\FF W>1$, and they 
are just the projections relative to a splitting $W=W_1\oplus W_0$: $e(w_1)=w_1$ and $e(w_0)=0$, 
for $w_1\in W_1$ and $w_0\in W_0$. For any such idempotent there is an associated 
short $(\SLs_2\times\SLs_2)$-structure, and the components $\cJ_1$, $\cJ_0$, $\cJ_{\frac12}$ in
\eqref{eq:A1A1isotypic} are the Peirce components relative to $e$: $\cJ_1=\End_\FF(W_1)^{(+)}$,
$\cJ_0=\End_\FF(W_0)^{(+)}$, and $\cJ_{\frac12}=\Hom_\FF(W_1,W_0)\oplus\Hom_\FF(W_0,W_1)$; 
while $\cT_i=(W_i\otimes Z^*)\oplus (W_i^*\otimes Z)$, for $i=1,0$.

\medskip

\subsection{Orthogonal Lie algebras}\label{ss:orthogonal}

Also, the results in \cite[\S 4.2]{Stasenko} show that  for a finite-dimensional simple 
Lie algebra of type $B$ or $D$
(orthogonal), the only $\SLs_2$-structures are given by viewing it as 
$\frso\bigl((V\otimes W)\perp Z\bigr)$, with $V$ as above, $W$ endowed with a nondegenerate
skew-symmetric bilinear form $\bup_W$, and $Z$ endowed with a nondegenerate symmetric
bilinear form $\bup_Z$. The symmetric bilinear form on $(V\otimes W)\perp Z$ is the orthogonal 
sum of $(.\mid .)\otimes \bup_W$ and $\bup_Z$. In this case, Subsection \ref{ss:proto} shows that
the associated $J$-ternary algebra $(\cJ,\cT)$ is given by $\cJ=\cH\bigl(\End_\FF(W),\tau\bigr)$, where
$\tau$ is the symplectic involution relative to $\bup_W$, and $\cT=W\otimes Z$.

The Jordan algebra $\cJ=\cH\bigl(\End_\FF(W)^{(+)},\tau\bigr)$ contains proper idempotents if and only
if the dimension of $W$ is at least $4$ (note that this dimension is always even). Any proper idempotent
is the projection relative to an orthogonal decomposition $W=W_1\perp W_0$ relative to $\bup_W$.
For any such idempotent, the components $\cJ_1$, $\cJ_0$, $\cJ_{\frac12}$ in
\eqref{eq:A1A1isotypic} are the Peirce components relative to $e$:  
$\cJ_1=\cH\bigl(\End_\FF(W_1)^{(+)},\tau\bigr)$, 
$\cJ_0=\cH\bigl(\End_\FF(W_0)^{(+)},\tau\bigr)$, and 
$\cJ_{\frac12}=\{ f\in \cH\bigl(\End_\FF(W)^{(+)},\tau\bigr)\mid f(W_1)\subseteq W_0,\, 
f(W_0)\subseteq W_1\}$; while $\cT_i=W_i\otimes Z$, $i=1,0$.

\medskip

\subsection{Symplectic Lie algebras}\label{ss:symplectic}

Finally, \cite[\S 4.2]{Stasenko} shows that  for a finite-dimensional simple Lie algebra of type $C$
(symplectic), the only $\SLs_2$-structures are given by viewing it as 
$\frsp\bigl((V\otimes W)\perp Z\bigr)$, with $V$ as above, $W$ endowed with a nondegenerate
symmetric bilinear form $\bup_W$, and $Z$ endowed with a nondegenerate skew-symmetric
bilinear form $\bup_Z$. The skew-symmetric bilinear form on $(V\otimes W)\perp Z$ is the orthogonal 
sum of $(.\mid .)\otimes \bup_W$ and $\bup_Z$. In this case, Subsection \ref{ss:proto} shows that
again the associated $J$-ternary algebra $(\cJ,\cT)$ is given by $\cJ=\cH\bigl(\End_\FF(W),\tau\bigr)$, 
where $\tau$ is the orthogonal involution relative to $\bup_W$, and $\cT=W\otimes Z$.

The Jordan algebra $\cJ=\cH\bigl(\End_\FF(W)^{(+)},\tau\bigr)$ contains proper idempotents if and only
if $\dim_\FF W>1$. Any proper idempotent
is again the projection relative to an orthogonal decomposition $W=W_1\perp W_0$ relative to $\bup_W$.
For any such idempotent, the components $\cJ_1$, $\cJ_0$, $\cJ_{\frac12}$ in
\eqref{eq:A1A1isotypic} are the Peirce components relative to $e$:  
$\cJ_1=\cH\bigl(\End_\FF(W_1)^{(+)},\tau\bigr)$, 
$\cJ_0=\cH\bigl(\End_\FF(W_0)^{(+)},\tau\bigr)$, and 
$\cJ_{\frac12}=\{ f\in \cH\bigl(\End_\FF(W)^{(+)},\tau\bigr)\mid f(W_1)\subseteq W_0,\, 
f(W_0)\subseteq W_1\}$; while $\cT_i=W_i\otimes Z$, $i=1,0$.

\medskip

\subsection{Exceptional Lie algebras}\label{ss:exceptional}

The results in \cite[\S 4.3]{Stasenko} show that for any of the finite-dimensional exceptional simple Lie 
algebras there is a short $\SLs_2$-structure whose associated Jordan algebra $\cJ$ is just the ground
field $\FF$. Then $\cT$, endowed with the skew-symmetric bilinear form $\langle .\mid .\rangle$ and the 
triple product $[x,y,z]\bydef d_{x,y}(z)$ is a \emph{symplectic triple system} 
(see \cite{YamagutiAsano}).
Alternatively {\rojo (see \cite[Theorem 4.7]{EldMagicII})}, the triple product 
$\{x,y,z\}\bydef d_{x,y}(z)-\langle x\mid z\rangle y-\langle y\mid z\rangle x$, is either trivial or endows
$\cT$ with the structure of a \emph{Freudenthal triple system} (see \cite{Meyberg}). The classification
of the finite-dimensional simple symplectic triple systems may be consulted in 
\cite[Theorem 2.21]{EldNew}. With one exception, they are obtained from the simple structurable algebras whose subspace of skew-symmetric elements has dimension one.

Apart from the short $\SLs_2$-structures above, where the Jordan algebra $\cJ$ has dimension $1$ and
hence has no proper idempotent, \cite[\S 4.3]{Stasenko} shows that there is no other short 
$\SLs_2$-structure for the simple Lie algebra of type $G_2$, and exactly one other short 
$\SLs_2$-structure in the remaining cases: $F_4$, $E_6$, $E_7$, and $E_8$. Therefore, Subsection
\ref{ss:Structurable} shows that the associated $J$-ternary algebra $(\cJ,\cT)$ can be obtained
from a structurable algebra $\cA=\cC_1\otimes\cC_2$, for a Cayley algebra $\cC_1$ and a unital
composition algebra $\cC_2$ of dimension $1$ for case $F_4$, dimension $2$ for $E_6$, dimension
$4$ for $E_7$, and dimension $8$ for $E_8$. In all these cases, the Jordan algebra $\cJ$ is the
Jordan algebra of a quadratic form, and there is a unique orbit, under its automorphism group, of
proper idempotents. As above, for any such  idempotent, the components 
$\cJ_1$, $\cJ_0$, $\cJ_{\frac12}$ in
\eqref{eq:A1A1isotypic} are the Peirce components relative to $e$:  
$\cJ_1=\FF e$, 
$\cJ_0=\FF(1-e)$ (recall that the unity $1$ is the element $c$ in \eqref{eq:Scdot}), and 
$\cJ_{\frac12}=\bigl(\FF e+\FF(1-e)\bigr)^\perp$; 
while $\cT_i=\{x\in\cT\mid e\bullet x=ix\}$, $i=1,0$.

\bigskip

\section*{Acknowledgments}
The authors are indebted to an anonymous referee for the careful reading of the manuscript and the suggestions offered to improve it.

%\bigskip
%
%
%\section*{Competing interests}
%
%The authors have no competing interests to declare that are relevant to the content of this article.

%-------------------------------------------

\bigskip

\end{document}